\newtheorem{theorem}{Theorem}
\newtheorem{assumption}{Assumption}
\newcommand{\hatS}{\hat{\mathcal{S}}}
\newcommand{\hatSr}{\hat{\mathcal{S}}_{\rm row}}
\newcommand{\hatSc}{\hat{\mathcal{S}}_{\rm column}}
\newcommand{\sE}{\mathcal{E}}
\newcommand{\sL}{\mathcal{L}}
\newcommand{\sU}{\mathcal{U}}
\newcommand{\sA}{{\mathcal{L}}}
\newcommand{\sB}{{\mathcal{U}}}
\newcommand{\EER}{\mathcal{R}(r)}
\newcommand{\st}{\;:\;}
\newcommand{\R}{\mathbb{R}}
\newcommand{\Prob}{\mathbf{Prob}}
\begin{document}

\title{Matrix Completion under Interval Uncertainty}
\author{Jakub Mare\v{c}ek, Peter Richt\'arik, Martin Tak\'{a}\v{c}
\thanks{Jakub Mare\v{c}ek is at IBM Research, Martin Tak\'{a}\v{c} is at Lehigh University, and Peter Richt\'arik is at the University of Edinburgh.
Their addresses are \texttt{jakub@marecek.cz}, \texttt{takac.mt@gmail.com}, and
\texttt{peter.richtarik@ed.ac.uk},
respectively.
} }
\maketitle

\begin{abstract}
Matrix completion under interval uncertainty can be cast as matrix completion with element-wise box constraints.
We present an efficient alternating-direction parallel coordinate-descent method for the problem.
We show that the method outperforms any other known method on a benchmark in image in-painting in terms of signal-to-noise ratio,
and that it provides high-quality solutions for an instance of collaborative filtering with 100,198,805 recommendations 
within 5 minutes. 
\end{abstract}

\section{Motivation}

Matrix completion is a well-known problem, with applications ranging from image processing
to recommender systems.
When dimensions of a matrix $X$ and some of its elements $X_{i,j}, (i, j) \in \mathcal{E}$ are known,
 the goal is to find the unknown elements.
Without imposing any further requirements on $X$, there are infinitely many solutions.
In many applications, however, the matrix completion that minimizes the rank:
\begin{equation}\label{eq:basicNP_MC}
   \mbox{min}_Y \ \mbox{rank}(Y) \quad
   \mbox{subject to} \quad Y_{i,j} = X_{i,j},\quad (i,j)\in \mathcal{E},
\end{equation}
provides the simplest explanation for the data. 
There is a long history of work on the problem, c.f. \cite{Grigoriev84,sarwar2000,ye2005generalized,Koren2009},
with thousands of papers published annually since 2010.
 We hence cannot provide a complete overview. 


Let us note that 
Fazel \cite{fazel2002matrix} suggested to replace the rank, which is the sum of non-zero elements 
of the spectrum, with the nuclear norm, which is the sum of the spectrum.
The minimisation of the nuclear norm can be cast as a semidefinite programming (SDP) problem
and approaches based on the nuclear-norm have proven very successful in theory \cite{candes2009exact}
and very popular in practice. 
\cite{sarwar2000,cai2010singular} study the Singular Value Thresholding (SVT) algorithm.
This, however, required the computation of a singular value decomposition (SVD) in each iteration.
A number of other approaches, e.g., augmented Lagrangian methods \cite{tomioka2010fast}, appeared, but those would require a truncated SVD or
a number of iterations \cite{jaggi2010simple,Lee2010,shalev2011large,wang2014rank} of the power method.
Even considering the recent progress in randomized methods for approximating SVD, \cite{halko2011finding},
the approximation becomes very time-consuming as the dimensions of matrices grow.

A major computational break-through came in the form of the alternating least squares (ALS) algorithms \cite{srebro2004maximum,Rennie2005}. 
Initially, the algorithm has been used as a heuristic for finding stationary points
of the non-convex problem \cite{srebro2004maximum,Rennie2005,mnih2007probabilistic,4470228,4803763},
where a single iteration had complexity $O(|\mathcal{E}|r^2)$, for $|\mathcal{E}|$ 
observations and rank $r$, c.f., p. 60 in \cite{Keshavan2012}.
Keshavan et al.\ \cite{keshavan2010matrix,Keshavan2012}, however, proved its exponential
rate of convergence to the global optimum with high probability, under probabilistic
assumptions common in the compressed sensing community. 
Further, more technical analyses of the convergence to the global optimum 
have been performed by Jain et al. \cite{Jain2013}. 


Many studies of matrix completion consider the uncertainty, in some form. 
A number of analyses \cite{keshavan2010matrix,Keshavan2012,Jain2013} consider the
use of the standard rank-minimisation for the reconstruction of low-rank $m \times n$ matrix $XY^T$ from $XY^T + W$, where
$X \in \R^{m \times r}$, 
$Y \in \R^{n \times r}$,
$W \in \R^{m \times n}$
with elements of $W$ being bounded i.i.d. random variables, which are sub-Gaussian and 
have bounded expectation.
A number of further analyses \cite{NIPS20093704,candes2011robust} considered 
the use of the standard rank-minimisation for the reconstruction of low-rank $m \times n$ matrix
$XY^T$ from $XY^T + S$, where $X, Y$ are as above and $W$ has a small number of non-zero entries. 
\cite{Chen2011} consider some columns being corrupted. 
Although we are not aware of any studies of matrix completion under interval uncertainty, interval-based uncertainty has been considered in related problems. Alaiz et al.\ \cite{Alaiz2013} consider the min-max variant of the problem of finding the nearest correlation matrix, i.e., the
problem of finding the closest matrix within the set of symmetric positive definite matrices
with the unit diagonal to an uncertainty set, with respect to the Frobenius norm.
\cite{li2014robust} studied interval uncertainty in certain semidefinite programming problems,
which can be used to encode the nuclear-norm minimisation.

In contrast, we present an extension of matrix completion toward interval uncertainty,
 which has applications in image in-painting, collaborative filtering, and beyond. The algorithm we present for solving the problem can be seen as a coordinate-wise version of the ALS algorithm,
 which does not require the approximation of the spectrum of the matrix.
This makes it possible solve complete matrices $480,189 \times 17,770$ matrix within minutes on a standard laptop.
First, we provide an overview of the possible applications.


\subsection{Collaborative Filtering under Uncertainty}
Collaborative filtering  is a well-established application of matrix completion problems \cite{srebro2004learning},
 largely thanks to the success of the Netflix Prize.
There is a matrix, 
 where each row corresponds to one user and each column corresponds to a product or service.
Considering that every user rates only a modest number of products or services,
  there are only a small number of entries of the matrix known.
Our extension is motivated by the fact, that one user may provide two different ratings for one and the same product at two different times,
  depending on the current mood and other circumstances at the two times.
One may hence want to consider an interval $[\underline{x}, \overline{x}]$ instead of a fixed value $x$ of the rating,
  e.g., $[x - \epsilon, x + \epsilon]$.
Further, when one knows the scale $[0, M]$ the rating $x$ is chosen from, one can
 consider  $[\max\{ 0, x - \epsilon\}, \min \{ x + \epsilon, M\} ]$.
Hence, if intervals are known for elements $X_{i,j}$ of a matrix $X$ indexed by $(i,j)\in \mathcal{I}$, one may want to solve:
\begin{align}
   \mbox{min}_{Y_{i,j} \in [0, M]} \mbox{max}_{ X_{i,j} \in [\underline{X_{i,j}},  \overline{X_{i,j}}] \forall (i,j)\in \mathcal{I} } & \; \mbox{rank}(Y)   \label{eq:NetflixGeneral} \\ 
   \mbox{subject to} & \; Y_{i,j} = X_{i,j}, \quad \forall (i,j)\in \mathcal{I}. 
\notag
\end{align} 
Although numerous extensions of matrix completion problems have been studied, e.g. \cite{Mehta2007}, the use of robustness to interval uncertainty is novel.
It can be seen as an extension of robust optimisation \cite{Soyster1973} to matrix completion.

\subsection{Image In-Painting}
Further applications can be found in image processing.
In in-painting problems, a subset of pixels from an image are given and the goal is to fill in the missing pixels.
Rank-constrained matrix completion with equalities, where $\mathcal{I}$ is the index set of all known pixels, has been used numerous times 
\cite{candes2009exact,jain2010,mazumder2010spectral,goldfarb2009solving,Lee2010,jaggi2010simple,wang2014rank,wang2014rank} in this setting.
If the image comes from real sensors, it the corresponding matrix may have full (numerical) rank, but have quickly decreasing singular values in its spectrum.
In such a case, instead of solving the equality-constrained problem \eqref{eq:basicNP_MC},
one should like to find a low-rank approximation $Y^*$ of $X$,
such that the known entry of $X$ is not far away from $Y^*$, i.e.,
$\forall (i,j)\in \mathcal{I}$ we have $Y_{i,j} \approx X_{i,j}$.
Let us illustrate this with a small  matrix
\begin{equation}
X =\begin{pmatrix}
    68.16 &   78.12 &   24.04\\
   78.12  &   90.09 &    30.03\\
   24.04  &   30.03 &   20.01
     \end{pmatrix}, \notag
\end{equation}
which has rank 3 and its singular values $\Sigma=(167.9945, 10.2553,  0.0102)^T$.
It is easy to verify that
\begin{equation}
Y^*(2)=\begin{pmatrix}
68.1546 &  78.1250 &  24.0389 \\
78.1250 &  90.0853 &  30.0310 \\
24.0389 &  30.0310 &  20.0098
\end{pmatrix} \notag
\end{equation}
is the best rank 2 approximation of $X$ in   Frobenius norm.
Observe that no single element of $Y^*(2)$ is identical to $X$, but that
$Y^*(2)\approx X$. It is an easy exercise to show that
for any $X \in \R^{m \times n}$ with singular values
$\sigma_1 \geq \sigma_2\geq \dots\geq \sigma_{\min\{m, n\}}$,
and $Y^*(r)$ as its best rank-$r$ approximation, we have
$
 | X_{i,j} - (Y^*(r))_{i,j} | \leq \sum_{i=r+1}^{\min\{m, n\}}\sigma_{i} =: \EER$
for all $(i,j)$.
Therefore, one should not require equality constrains in \eqref{eq:basicNP_MC}, but rather inequalities
 $ | Y_{i,j} - X_{i,j}| \leq \EER, \forall (i,j)\in \mathcal{I}$.
Notice that this approach is not the same as minimizing $\sum_{(i,j)\in \mathcal{I}} (X_{i,j} - Y_{i,j})^2$
over all rank $r$ matrices, because we do not penalize the elements of $Y$, which are already close to $X$. It is also different from the usual treatment of noise
in the observations \cite{candes2010matrix}.
One could rather formulate this as the minimization of $\sum_{(i,j)\in \mathcal{I}} \max\{0, |X_{i,j} - Y_{i,j}|-\EER\}^2$ over all rank $r$ matrices.
Further, one knows the range of values allowed, e.g., $[0,1]$ for common encoding of gray-scale images.
This can hence be seen as ``side information'' which, as we will show in numerical section,
improves recovery of a low-rank approximation considerably.
Further still, one could assume that the intensity should be at least 0.8,
if pixels are missing within a light region of the image, or similar domain-specific heuristics.

\medskip
A number of other applications, e.g., in the recovery of structured matrices
\cite{chen2013spectral}, forecasting with side information, and in sparse principal component analysis with priors
on the principal components, can be envisioned.

\section{The Problem}

Formally, let $X$ be an $m \times n$ matrix to be reconstructed. Assume that elements
 $(i,j)\in \sE$ of $X$ we wish to fix, for elements $(i,j)\in \sL$ we have lower
 bounds and for elements $(i,j) \in \sU$ we have upper bounds. We employ the
 following natural formulation for the equality and inequality constrained
 matrix completion problem:
  \begin{align}
\label{eq:mainProblem}
\begin{split}
  \min_{X \in \mathbb \R^{m \times n}} \quad & \quad \mbox{rank}(X)\\
\mbox{subject to } \quad
      & X_{ij} =   X^{\sE}_{ij}, \ (i,j) \in \sE \\ 
      & X_{ij} \geq X^{\sL}_{ij}, \ (i,j) \in \sL \\
      & X_{ij} \leq X^{\sU}_{ij}, \ (i,j) \in \sU.\\
\end{split}
\end{align}

We shall enforce the following natural assumption:

\begin{assumption} $\sE \cap (\sL\cup \sU) = \emptyset$ and $X^{\sL}_{ij} \leq X^{\sU}_{ij} $ whenever $(ij)\in \sL\cap  \sU$.
\end{assumption}

The first condition says that if some element $(ij)$ is already fixed by an equality constraint, it does not (unnecessarily) appear any of the inequality constraints. The second condition says the upper and lower bounds should be consistent.

Problem~\eqref{eq:mainProblem} is NP-hard,
even with $\sB = \sA = \emptyset$ \cite{MR1320206,harvey2006complexity}.
A number of special cases of \eqref{eq:mainProblem} 
have been studied in the literature, e.g., in \cite{sarwar2000,niu2011hogwild,Kannan2012}.
A popular heuristic enforces low rank in a synthetic way by writing $X$ as a product of
two matrices, $X=L R$, where $L \in \R^{m \times r}$ and $R\in \R^{r \times n}$. Hence,
$X$ is of rank at most $r$ \cite{tanner2013normalized}.
Let $L_{i:}$ and $R_{:j}$ be the $i$-th row and $j$-th column of $L$ and $R$, respectively.
Instead of \eqref{eq:mainProblem}, we consider the \emph{smooth, non-convex}  problem
\begin{equation}\label{eq:NONCREF}
\min \{f(L,R)\;:\; L\in \R^{m\times r}, \; R\in \R^{r\times n}\},
\end{equation}
where
\begin{align}\label{defOff} 
f(L,R) :=  \tfrac{\mu}{2}\|L\|_{F}^2 &
+ \tfrac{\mu}{2}\|R\|_{F}^2 + f_{\sE}(L,R) + f_{\sL}(L,R) + f_{\sU}(L,R).
\end{align}
Above we have
\begin{eqnarray*}
f_{\sE}(L,R) &:=& \textstyle{\tfrac{1}{2}\sum_{(ij)\in \sE}(L_{i:}R_{:j}-X^{\sE}_{ij})^2}\\
f_{\sL}(L,R) &:=& \textstyle{\tfrac{1}{2}\sum_{(ij)\in \sL}(X^{\sL}_{ij}-L_{i:}R_{:j})_+^2}\\
f_{\sU}(L,R) &:=& \textstyle{\tfrac{1}{2}\sum_{(ij)\in \sU}(L_{i:}R_{:j}-X^{\sU}_{ij})_+^2},
\end{eqnarray*}
where
$\xi_+ = \max\{0,\xi\}$.

The parameter $\mu>0$ helps to prevent scaling issues\footnote{Let $X=L R$, then also $X=(cL)(\frac1c R)$ as well,
but we see that
for $c\to 0$ or $c\to \infty$ we have
$\|L\|_{F}^2 + \|R\|_{F}^2 \ll \|cL\|_{F}^2 + \|\frac1c R\|_{F}^2$.
}. We could optionally set $\mu$ to zero and instead, from time to time, rescale matrices $L$ and $R$, so that their product is not changed \cite{tanner2013normalized}. The term $f_\sE$ (resp.\ $f_\sU$, $f_\sL$) encourages  the equality (resp.\ inequality) constraints to hold.

\section{The Method}

  \begin{algorithm}[t]
 \caption{MACO: Matrix Completion via Alternating Parallel Coordinate Descent}
 \label{alg:SCDM}
 \begin{algorithmic}[1]
 \item[] Input: $\sE, \sL, \sU, X^{\sE}, X^{\sL}, X^{\sU}$, rank $r$
  \item[] Output: $m \times n$ matrix $LR$ 
 \STATE \label{alg:stp:initialpoint} choose $L\in \R^{m\times r}$ and $R\in \R^{r\times n}$
 \FOR{$k=0,1,2,\dots$}
   \STATE choose random subset $\hatSr \subset \{1,\dots,m\}$
   \FOR{$i\in \hatSr$ {\bf in parallel}}
      \STATE choose $\hat r \in \{1,\dots,r\}$ uniformly at random
      \STATE compute $\delta_{i \hat r}$ using formula \eqref{eq:delta_L}
     \STATE update $L_{i \hat r}  \leftarrow  L_{i \hat r} + \delta_{i \hat r}$
   \ENDFOR
   \STATE choose random subset $\hatSc \subset \{1,\dots,n\}$
      \FOR{$j\in \hatSc$ {\bf in parallel}}
      \STATE choose $\hat r \in \{1,\dots,r\}$ uniformly at random
	\STATE compute $\delta_{\hat{r}j}$ using \eqref{eq:delta_R}
	\STATE update $R_{\hat{r}j} \leftarrow R_{\hat{r}j} + \delta_{\hat{r} j}$
   \ENDFOR
 \ENDFOR
 \end{algorithmic}
\end{algorithm}

Coordinate descent algorithms (CDA) are effective in solving large-scale problems, due to their low per-iteration computational cost.
Although each iteration of CDA is cheap, many more iterations are required for convergence, compared to second-order algorithms or similar.
Recently, the stochastic CDA has received much attention
 \cite{nesterov2012efficiency,RT-serial}
 not least due to the parallelizability \cite{PCDM, NSync, DQA,
ImprovedPCDM} with almost linear speed-up in regimes with sparse data, when the number of parallel updates $\tau$ is much smaller that the dimension of the optimization problem \cite{niu2011hogwild}. Distributed variants have also been studied \cite{Hydra3,Hydra}. 

In Algorithm \ref{alg:SCDM},
we present our alternating parallel coordinate descent method for MAtrix COmpletion, henceforth simply ``MACO''. In Steps 3--8 of our algorithm, we fix $R$, choose random $\hat{r}$ and a random set $\hatSr$ of rows of $L$, and
update, in parallel for $i \in \hatSr$:  $L_{i\hat{r}} \leftarrow L_{i\hat{r}} + \delta_{i\hat{r}}$.
In Steps 9--14, we fix $L$, choose random $\hat{r}$ and a random set $\hatSc$ of columns of $R$, and update, in parallel
for $j \in \hatSc$: $R_{\hat{r}j}\leftarrow R_{\hat{r}j} + \delta_{\hat{r}j}$.

Let us now comment on the computation of the updates, $\delta_{i\hat{r}}$ and $\delta_{\hat{r}j}$. First, note that while $f$ is not convex jointly
in $(L,R)$, it is convex in $L$ for fixed $R$ and in $L$ for fixed $R$.

\subsection{Row Update}

If we now fix row $i\in \{1,2,\dots,m\}$ and $\hat{r}\in \{1,2,\dots,r\}$, and view $f$ as a function of $L_{i\hat{r}}$ only, it has a Lipschitz continuous derivative with constant
\begin{equation}\label{eq:asfdsafsa}
W_{i\hat{r}} =
 W_{i\hat{r}}(R):= \mu + \sum_{j \st(i j) \in \sE}
        R_{\hat{r}j}^2
       + \sum_{j \st(i j) \in \sL\cup\sU}
        R_{\hat{r}j} ^2.
\end{equation} 
That is, for all $L$, $R$ and $\delta\in \R$, we have
\begin{equation}\label{eq:iuhs98y9s8}f(L+\delta E_{i\hat{r}},R) \leq f(L,R) + \langle \nabla_L f(L,R), E_{i\hat{r}}\rangle \delta  + \frac{W_{i\hat{r}}}{2}\delta^2,\end{equation}
where $E_{i\hat{r}}$ is the $n\times r$ matrix with $1$ in the $(i \hat{r})$ entry and zeros elsewhere.  The minimizer of the right hand side of \eqref{eq:iuhs98y9s8} in $\delta$ is given by
\begin{equation} \label{eq:delta_L}\delta_{i\hat{r}}:= - \langle \nabla_L f(L,R), E_{i\hat{r}}\rangle / W_{i\hat{r}},\end{equation}
where \begin{eqnarray*}\langle \nabla_L f(L,R), E_{i\hat{r}}\rangle = 
\mu L_{i \hat{r}} &+& \sum_{j \st (i j) \in \sE}
                ( L_{i:} R_{:j} - X^\sE_{ij}) R_{\hat{r}j }  \\
&+&\sum_{j \st (i j) \in \sU \;\&\; L_{i:} R_{:j} < X_{ij}^\sU }   ( L_{i:} R_{:j}-X_{ij}^\sU) R_{\hat{r}j}\\
&+&\sum_{j \st (i j) \in \sL \;\&\; L_{i:} R_{:j} > X_{ij}^\sL} (L_{i:} R_{:j}-X_{ij}^\sL) R_{\hat{r}j}.
\end{eqnarray*} Note that
\begin{equation}\label{eq:sjsus8}f(L+\delta_{i\hat{r}} E_{i\hat{r}}, R) \leq f(L,R) - \frac{\langle \nabla_L f(L,R),E_{i\hat{r}}\rangle^2}{2W_{i\hat{r}}}.\end{equation}
Let $W^{(k)}_{i\hat{r}} := W_{i\hat{r}}(R^{(k)})$ be the value of the Lipschitz  constant at iteration $k$. 

\subsection{Column Update}

Likewise, if we now fix $\hat{r}\in \{1,2,\dots,r\}$ and column $j\in \{1,2,\dots,n\}$, and view $f$ as a function of $R_{\hat{r}j}$ only, it has a Lipschitz continuous derivative with constant
\[V_{\hat{r}j} = V_{\hat{r}j}(L)
 := \mu + \sum_{i \st (i j) \in \sE}
        L_{i\hat{r}}^2
       + \sum_{i \st(ij) \in \sU\cup\sL}
        L_{i\hat{r}}^2.\]
That is, for all $L$, $R$ and $\delta\in \R$, 
\begin{equation}\label{eq:iugs9t98ed}f(L,R+\delta E_{\hat{r}j}) \leq f(L,R) + \langle \nabla_R f(L,R), E_{\hat{r}j}\rangle \delta  + \frac{V_{\hat{r}j}}{2}\delta^2,\end{equation}
where $E_{\hat{r}j}$ is the $r\times m$ matrix with $1$ in the $(\hat{r}j)$ entry and zeros elsewhere. The minimizer of the right hand side of \eqref{eq:iugs9t98ed} in $\delta$ is given by
\begin{equation} \label{eq:delta_R}\delta_{\hat{r}j}:= - \langle \nabla_R f(L,R), E_{\hat{r}j}\rangle / V_{\hat{r}j},\end{equation}
where 
\begin{eqnarray*}
\langle \nabla_R f(L,R), E_{\hat{r}j}\rangle =
\mu R_{\hat{r}j} &+& \sum_{i \st (ij) \in \sE}
                ( L_{i:} R_{:j} - X^\sE_{ij}) L_{i\hat{r}} \\
&+& \sum_{i \st (ij) \in \sL \;\&\; L_{i:} R_{:j} < X_{ij}^\sL }
                 ( L_{i:} R_{:j}-X_{ij}^\sL) L_{i\hat{r}} \\
&+& \sum_{i \st (ij) \in \sU \;\&\; L_{i:} R_{:j} > X_{ij}^\sU }
                 (L_{i:} R_{:j}-X_{ij}^\sU) L_{i\hat{r}}.
\end{eqnarray*}
Note that
\begin{equation}\label{eq:sjsusss8}f(L, R+\delta_{\hat{r}j} E_{\hat{r}j}) \leq f(L,R) - \frac{\langle \nabla_R f(L,R),E_{\hat{r}j}\rangle^2}{2V_{\hat{r}j}}.\end{equation}
Let $V^{(k)}_{\hat{r}j} := W_{\hat{r}j}(L^{(k)})$ be the value of the Lipschitz  constant at iteration $k$. 

\subsection{Row and Column Sampling}

The random set (``sampling'') $\hatSr$ defined in Step 3 (resp sampling $\hatSc$ in Step 10) can have an arbitrary distribution as long as it contains every row (resp column) of matrix $L$ (resp $R$) with positive probability. We shall now formalize this.

\begin{assumption} The samplings $\hatSr$ and $\hatSc$ are  {\em proper}, i.e., 
\[\Prob(i\in \hatSr) > 0 \quad \text{for all} \quad i\in \{1,2,\dots,m\},\]
and
\[\Prob(j\in \hatSc) > 0 \quad \text{for all} \quad j\in \{1,2,\dots,n\}.\]
\end{assumption}

In particular, we can  chose the random sets $\hatSr$ (resp $\hatSc$) so that every row (resp column) has equal probability of being chosen. Samplings with this property are called {\em uniform}, and we use this choice in our experiments. However, our theory also allows for nonuniform samplings. If we have a multicore machine available with $\tau$ cores, then a reasonable sampling  should have cardinality $\tau$, or some integral multiple of $\tau$, so that every core has a reasonable (not too small to be underutilized, but not too large either, so as to avoid long processing time) load at every iteration.

\subsection{The Final Step}

Formulae \eqref{eq:delta_L} and \eqref{eq:delta_R}
suggest that the computation of the final step is very computationally demanding. This can, however, be avoided if we define matrices $A\in\R^{m\times r}$ and $B\in\R^{r\times n}$ such that $A_{iv}=W_{iv}$ and $B_{vj}=V_{vj}$. After each update of the solution, we can also update those matrices. Similarly, one can store sparse  residuals matrices
$\Delta_\sE$, $\Delta_\sL$, $\Delta_\sU$, where
$$(\Delta_\sE)_{i,j}=\begin{cases}
                    L_{i:} R_{:j} -X^\sE_{ij},&\mbox{if}\ (ij)\in\sE
\\
                     0,&\mbox{otherwise,}
                    \end{cases}$$
and $\Delta_\sU$, $\Delta_\sL$ are defined in similar way.
Subsequently, the computation of $\delta_{i \hat r}$ or $\delta_{ \hat{ r} j}$
is reduced to just a few multiplications and additions.

\subsection{Convergence Analysis}

 Due to the non-convex nature of \eqref{eq:NONCREF}, one 
 has to be satisfied with convergence to a stationary point, in general.

\begin{theorem}
Let $\mu>0$ and 
and let $(L^{(k)}, R^{(k)})$ be the (random) matrices produced by Algorithm~\ref{alg:SCDM} after $k$ iterations, assuming that $\hatSr$ and $\hatSc$ are proper. Then  for all $k\geq 0$,
\begin{equation}\label{eq:monotonicity}
  0 \leq  f(L^{(k+1)},R^{(k+1)}) \leq f(L^{(k)},R^{(k)}).
\end{equation}
That is, the method is monotonic. Moreover, with probability 1,
\[\lim_{k\to \infty} \inf \|\nabla_L f(L^{(k)},R^{(k)})\|
= 0,\] 
and  
\[\lim_{k\to \infty} \inf \|\nabla_R 
f(L^{(k)},R^{(k)})\| = 0.\]
\end{theorem}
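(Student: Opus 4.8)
The plan is to split the claim into the deterministic monotonicity \eqref{eq:monotonicity} and the almost-sure vanishing of the two gradients, reducing everything to the single-coordinate descent inequalities \eqref{eq:sjsus8} and \eqref{eq:sjsusss8}. The structural fact I would establish first is that, for fixed $R$, the objective \eqref{defOff} is \emph{separable across the rows of $L$}: one can write $f(L,R)=\tfrac{\mu}{2}\|R\|_F^2+\sum_{i=1}^m g_i(L_{i:})$, where each $g_i$ depends on $L$ only through its $i$-th row, since every summand of $f_\sE,f_\sL,f_\sU$ indexed by $(ij)$ involves $L$ only via $L_{i:}$ and the Frobenius regulariser splits by rows. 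Because the parallel row phase of Algorithm~\ref{alg:SCDM} updates a single entry $L_{i\hat r}$ in each \emph{distinct} row $i\in\hatSr$, these updates act on disjoint blocks $g_i$ and cannot interfere. Applying \eqref{eq:sjsus8} to each updated row and summing therefore gives the deterministic decrease $f(L^{(k+1)},R^{(k)})\le f(L^{(k)},R^{(k)})-\sum_{i\in\hatSr}\langle\nabla_L f,E_{i\hat r}\rangle^2/(2W_{i\hat r})$, with the symmetric statement for the column phase from \eqref{eq:sjsusss8}. Together with $f\ge 0$ (a sum of a nonnegative regulariser and squared residuals), this yields \eqref{eq:monotonicity} at once; notably, no expected-separable-overapproximation bound is needed, precisely because the parallel block is exactly separable.

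Next I would exploit $\mu>0$ to control the steps uniformly. Monotonicity gives $\tfrac{\mu}{2}(\|L^{(k)}\|_F^2+\|R^{(k)}\|_F^2)\le f(L^{(k)},R^{(k)})\le f(L^{(0)},R^{(0)})$, so all iterates lie in a fixed ball, and the Lipschitz constants obey $\mu\le W^{(k)}_{i\hat r},\,V^{(k)}_{\hat r j}\le W_{\max}$ for a deterministic $W_{\max}$ depending only on $\mu$ and the initial value; this is the single place where $\mu>0$ is indispensable. Taking the conditional expectation of the row decrease over the draw of $\hatSr$ and the uniform choice of $\hat r$, and using properness ($p_{\min}:=\min_i\Prob(i\in\hatSr)>0$), I would obtain $\Exp[\,f(L^{(k)},R^{(k)})-f(L^{(k+1)},R^{(k)})\mid\mathcal F_k\,]\ge c_L\|\nabla_L f(L^{(k)},R^{(k)})\|_F^2$ with $c_L=p_{\min}/(2rW_{\max})$, and symmetrically $\Exp[\,f(L^{(k+1)},R^{(k)})-f(L^{(k+1)},R^{(k+1)})\mid\mathcal F_{k+1/2}\,]\ge c_R\|\nabla_R f(L^{(k+1)},R^{(k)})\|_F^2$, where $\mathcal F_{k+1/2}$ is the history through the row phase.

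Summing these and telescoping against $f(L^{(0)},R^{(0)})-\lim_k f\ge 0$, the tower property gives $\sum_{k\ge 0}\bigl(c_L\Exp\|\nabla_L f(L^{(k)},R^{(k)})\|_F^2+c_R\Exp\|\nabla_R f(L^{(k+1)},R^{(k)})\|_F^2\bigr)\le f(L^{(0)},R^{(0)})<\infty$. Both series of expectations therefore converge; by monotone convergence the corresponding nonnegative random series converge almost surely, so each summand tends to $0$ a.s. In particular $\|\nabla_L f(L^{(k)},R^{(k)})\|\to 0$ a.s., which gives (indeed strengthens) the stated $\liminf$ for the $L$-gradient.

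The one genuine obstacle is that the column descent controls $\nabla_R f$ at the \emph{intermediate} point $(L^{(k+1)},R^{(k)})$, whereas the theorem asks for it at the current iterate $(L^{(k)},R^{(k)})$. To bridge this I would observe that the row step is vanishing: since $W^{(k)}_{i\hat r}\ge\mu$, the update \eqref{eq:delta_L} satisfies $\|L^{(k+1)}-L^{(k)}\|_F^2\le\mu^{-2}\|\nabla_L f(L^{(k)},R^{(k)})\|_F^2\to 0$ a.s. As the iterates remain in a bounded set on which $\nabla_R f$ is Lipschitz in $L$, we get $\|\nabla_R f(L^{(k)},R^{(k)})-\nabla_R f(L^{(k+1)},R^{(k)})\|\to 0$, transferring the limit to the required point and proving $\liminf_k\|\nabla_R f(L^{(k)},R^{(k)})\|=0$. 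I expect this transfer, together with the careful conditional-expectation bookkeeping across the two alternating half-steps, to demand the most care; the rest reduces to inequalities already present in the excerpt.
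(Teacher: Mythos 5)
Your proposal is correct and in fact proves more than the theorem asks, but after the monotonicity step it follows a genuinely different route from the paper's own proof, so a comparison is worthwhile. For \eqref{eq:monotonicity} both arguments are the same chain $f(L^{(k)},R^{(k)})\geq f(L^{(k+1)},R^{(k)})\geq f(L^{(k+1)},R^{(k+1)})\geq 0$ built from \eqref{eq:sjsus8} and \eqref{eq:sjsusss8}; your explicit observation that $f(\cdot,R)$ is separable across the rows of $L$, so that the parallel one-entry-per-row updates cannot interfere and their decreases add, is a detail the paper uses silently, and spelling it out is an improvement, since \eqref{eq:sjsus8} by itself only covers a single-entry update. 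For the gradient claims the proofs part ways. The paper fixes a coordinate pair $(i,v)$, argues that properness plus the uniform choice of $\hat r$ makes $(i,v)$ be sampled along an infinite random set $K_{iv}$ of iterations, and sums the decrease bound $f(L^{(k+1)},R^{(k+1)})\leq f(L^{(k)},R^{(k)})-(\Delta^{(k)}_{iv})^2/C$, with $\Delta^{(k)}_{iv}=\langle\nabla_L f(L^{(k)},R^{(k)}),E_{iv}\rangle$ and $C=2(\mu+\tfrac{2}{\mu}f(L^{(0)},R^{(0)}))$, over $k\in K_{iv}$ to conclude $\liminf_k(\Delta^{(k)}_{iv})^2=0$ a.s.\ for every $(i,v)$; no expectations are ever taken. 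You instead take conditional expectations of the per-iteration decrease, bound them below by $c_L\|\nabla_L f(L^{(k)},R^{(k)})\|_F^2$ (resp.\ $c_R\|\nabla_R f(L^{(k+1)},R^{(k)})\|_F^2$), telescope, and convert summability in expectation into a.s.\ summability via Tonelli. Your route buys three things: (i) the stronger conclusion that the gradient norms converge to zero a.s., not merely have zero liminf; (ii) immunity to a real soft spot in the paper, namely that ``$\liminf_k(\Delta^{(k)}_{iv})^2=0$ for every $(i,v)$'' does not by itself yield $\liminf_k\sum_{i,v}(\Delta^{(k)}_{iv})^2=0$, because the coordinate-wise liminfs may be attained along different subsequences (your full-norm argument never decouples the coordinates, so it does not face this issue); and (iii) an explicit repair of the intermediate-point mismatch --- the column phase controls $\nabla_R f$ at $(L^{(k+1)},R^{(k)})$ rather than at the iterate $(L^{(k)},R^{(k)})$ --- via the vanishing-step bound $\|L^{(k+1)}-L^{(k)}\|_F\leq\mu^{-1}\|\nabla_L f(L^{(k)},R^{(k)})\|_F$ and local Lipschitzness of the gradient on the bounded level set, a point the paper's ``can be proved in an analogous way'' glosses over. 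What the paper's route buys is economy: it needs no filtration, no conditional expectations and no Lipschitz-transfer step, only nonnegativity of $f$ and a Borel--Cantelli-type sampling argument. Both proofs ultimately rest on the same two pillars, the per-coordinate decrease inequalities \eqref{eq:sjsus8}, \eqref{eq:sjsusss8} and the uniform bounds $\mu\leq W^{(k)}_{iv},V^{(k)}_{vj}\leq\mu+\tfrac{2}{\mu}f(L^{(0)},R^{(0)})$ of \eqref{eq:oi98s98g}, so your proposal should be read as a strengthening of the published argument rather than a mere variant of it.
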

\begin{proof}
From 
\eqref{eq:sjsus8}
and \eqref{eq:sjsusss8}
we see that for all $k$ we have
$$
f(L^{(k)},R^{(k)})
\overset{\eqref{eq:sjsus8}}{\geq} 
f(L^{(k+1)},R^{(k)})
\overset{\eqref{eq:sjsusss8}}{\geq} 
f(L^{(k+1)},R^{(k+1)}) \geq 0,
$$ 
where the last inequality follows from the fact that all parts of $f$ defined in \eqref{defOff} are non-negative. 

Monotonicity \eqref{eq:monotonicity} together with the fact that  $\mu>0$
imply that the level set \[\Omega_0:=\{ (L,R) \st f(L,R) \leq f(L^{(0)},R^{(0)})\} \] is bounded. Now, for all $i\in \{1,2,\dots,m\}$, $v\in \{1,2,\dots,r\}$ and any iteration counter $k$ we have
\begin{equation}\label{eq:oi98s98g}\mu \overset{\eqref{eq:asfdsafsa}}{\leq} W^{(k)}_{iv} \overset{\eqref{eq:asfdsafsa}}{\leq}  \mu + \|R^{(k)}\|_F^2 \overset{\eqref{defOff} }{\leq} \mu + \frac{2}{\mu} f(L^{(k)},R^{(k)}) \leq \mu + \frac{2}{\mu}f(L^{(0)},R^{(0)}).\end{equation}
In the second inequality we have used Assumption 1, and in the last inequality we have used monotonicity. The same lower and upper bounds can be established for $V_{vj}^{(k)}$.

We shall now establish that $\lim \inf \|\nabla_L f(L^{(k)},R^{(k)})\|_F^2= 0$ with probability~1 (the claim  $\lim \inf \|\nabla_R f(L^{(k)},R^{(k)})\|_F^2=0$ can be proved in an analogous way). Since
\[\|\nabla_L f(L^{(k)},R^{(k)})\|_F^2 = \sum_{i=1}^m \sum_{v=1}^r  \langle \nabla_L f(L^{(k)},R^{(k)}), E_{iv}\rangle^2,\]
 it is enough to show that  for
$\Delta_{iv}^{(k)}:=\langle \nabla_L f(L^{(k)},R^{(k)}), E_{iv}\rangle$ we have $\lim \inf (\Delta_{iv}^{(k)})^2 = 0$  with probability 1 for  all $i \in \{1,2,\dots,m\}$ and $v\in \{1,2,\dots,r\}$. Fix any $i$ and $v$. Since $\hatSr$ is proper, and since $\hat{r}$ is chosen uniformly at random in each iteration, there is an infinite sequence of iterations, indexed by $K_{iv}$,  in which the pair $(i,v)$ is sampled.

In view of \eqref{eq:sjsus8} and \eqref{eq:oi98s98g}, for all $k\in K_{iv}$ we have
$$
f(L^{(k+1)},R^{(k+1)}) \leq f(L^{(k+1)},R^{(k)})  \leq f(L^{(k)},R^{(k)}) - \frac{(\Delta_{iv}^{(k)})^2}{C},
$$
where $C=2(\mu + \frac{2}{\mu}f(L^{(0)},R^{(0)}))$.
Since $f(L,U)$ is nonnegative, it must be the case that
$\sum_{k\in K_{iv}} (\Delta_{iv}^{(k)})^2$
is finite. 
This means that, with probability 1, $\lim_{k\to \infty} \inf (\Delta_{iv}^{(k)})^2 = 0$, as desired.


\end{proof}

\section{Computational Results and a Discussion}

We have conducted a variety of experiments. First, we present 
the performance in collaborative filtering,
next we compare the performance in image in-painting with classical
matrix completion techniques with $\sU\equiv \sL \equiv \emptyset$.
We conclude with remarks on the run-time and hardware used.

\subsection{Collaborative Filtering}

In our computational testing of collaborative filtering, we start with  \verb!smallnetflix_mm!,
where the training dataset contains $c_{\mbox{tr}}=3,298,163$ integers out of $\{1,2,3,4,5\}$,
which describe how $m=95,526$ users rate $n=3,561$ movies.
Second, we use a well-known data-set, which contains $100,198,805$ ratings on the same scale,
obtained from $480,189$ users considering $17,770$ products,
as available from CMU\footnote{ \tiny \url{http://www.select.cs.cmu.edu/code/graphlab/datasets/}
}.
Third, we use
Yelp's Academic Dataset\footnote{ \tiny \url{https://www.yelp.co.uk/academic_dataset}},
from which we have extracted a $252,898 \times 41,958$ matrix with 1,125,458 non-zeros,
 again on the 1--5 scale.

Although we know some ratings exactly on \verb!smallnetflix_mm!, we consider \eqref{eq:NONCREF} 
of \eqref{eq:mainProblem} with interval uncertainty sets of width 2:
\begin{equation}
 \begin{split}
Y_{i,j} \leq \min\{5, X_{i,j}+1\}, (i,j)\in \mathcal{I},\\
Y_{i,j} \geq \max\{1, X_{i,j}-1\}, (i,j)\in \mathcal{I}.
  \end{split}
  \label{eq:Netflix}
\end{equation}
In particular, we 
complete a $95526 \times 3561$ matrix of rank 2 or 3,
possibly using width-2 interval uncertainty set and
scale of 1 to 5 stars in the ratings.
To illustrate the impact of the this change, 
 we present the evolution of
 Root-Mean-Square Error (RMSE)
in Figure~\ref{fig:RMSE} (left). Notice that an ``epoch'', which is the unit on the horizontal axis, consists of $c_{\mbox{tr}}$ element updates of matrix $L$ and $c_{\mbox{tr}}$ element updates of matrix $R$.

Let us remark that RMSE is sensitive to the choice of  $\Delta$ and the rank of the matrix we are looking for.
If the underlying matrix has a higher rank than expected, $\Delta>0$ can lead to smaller values of RMSE.
We should also note that for some fixed $\Delta_1$ and $\Delta_2$, RMSE can be better with $\Delta_1$ for a few epochs, but then get worse when compared with  $\Delta_2$. 
Hence, in practice, cross validation should be used to determine suitable value of parameter $\Delta$.

On the 
Yelp data set, we have performed 10-fold cross-validation on the training set, using varying rank.
As we increased the rank from 1 to 2, 4, 8, 16, 32, and 50, the average error decreased
from 1.7958 to 1.8284, 1.6464, 1.4590, 1.3395, 1.2702, and 1.2454, respectively.
This seems to be comparable to the best results 
from the  2013 Recommender Systems Challenge\footnote{
\tiny \url{https://www.kaggle.com/c/yelp-recsys-2013} 
}, 
where a smaller dataset was used.

Further, one can illustrate the effects in a matrix-recovery experiment. 
We use random matrices $X\in\R^{20 \times 20}$ of rank $8$.
We sample $p\%$ of entries of the matrix and store their indices in $\mathcal{I}$.
We solve \eqref{eq:NONCREF} with just the inequality constrains, i.e.,
$\sE \equiv \emptyset, \sB \equiv \sA \equiv \mathcal{I}$,
$X^\sB = X - \Delta {\bf 1}$ and $X^\sA = X + \Delta {\bf 1}$, where ${\bf 1}\in \R^{m \times n}$ is a matrix with all elements equals to $1$.
Let us denote by $Y^*(\Delta)$ the solution of that optimization problem after $10^5$ serial iterations ($|\hatS|=1$) and with $\mu=10^{-5}$.
Figure \ref{fig:EXP1} shows the dependence of error defined as follows
$
 \mbox{Error}(\Delta) = \frac{\| Y^*(\Delta) - X(7) \|_F}{\| X(7) \|_F},$ where $X(r)$ is the best rank $r$ approximation of $X$ obtain using SVD decomposition of the whole matrix.
Figure \ref{fig:EXP1} clearly suggest that, e.g., if $50\%$ of elements are observed then by allowing each entry $\in \mathcal{I}$ of reconstructed matrix
to lie in $\Delta$ neighborhood of observed values, we can decrease the relative error of reconstruction from approximately $1.22$ to $0.4$ for $\Delta \approx \EER$. In this case, the value of $\|X(7)\|_F$ was  $21.3245$ and   $\EER=0.1075$.

\begin{figure*}[tp!]
 \centering
 \begin{tabular}{c c c}
  $p=30$ & $p=50$ & $p=80$ \\

 \includegraphics[width=0.3\textwidth]{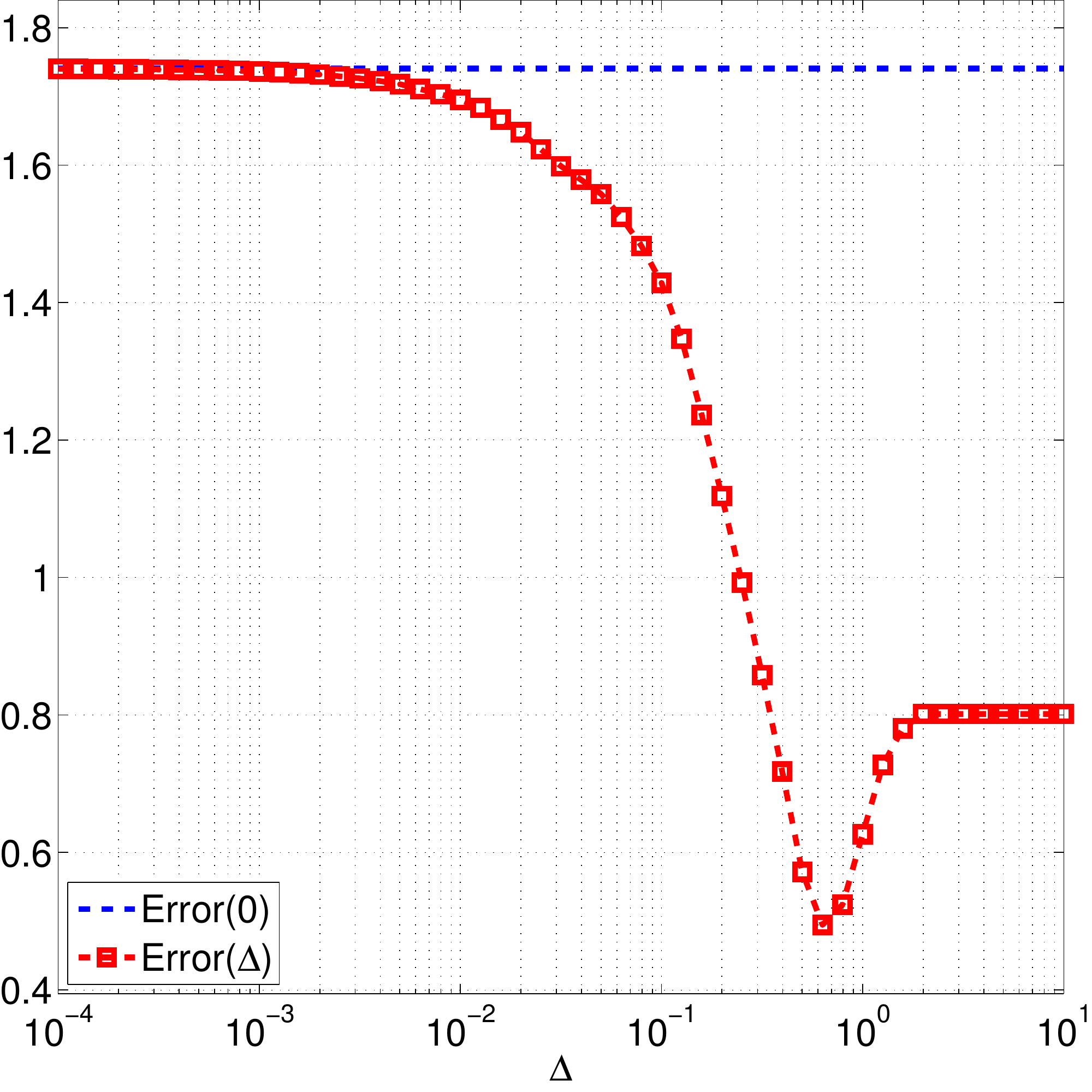} &
 \includegraphics[width=0.3\textwidth]{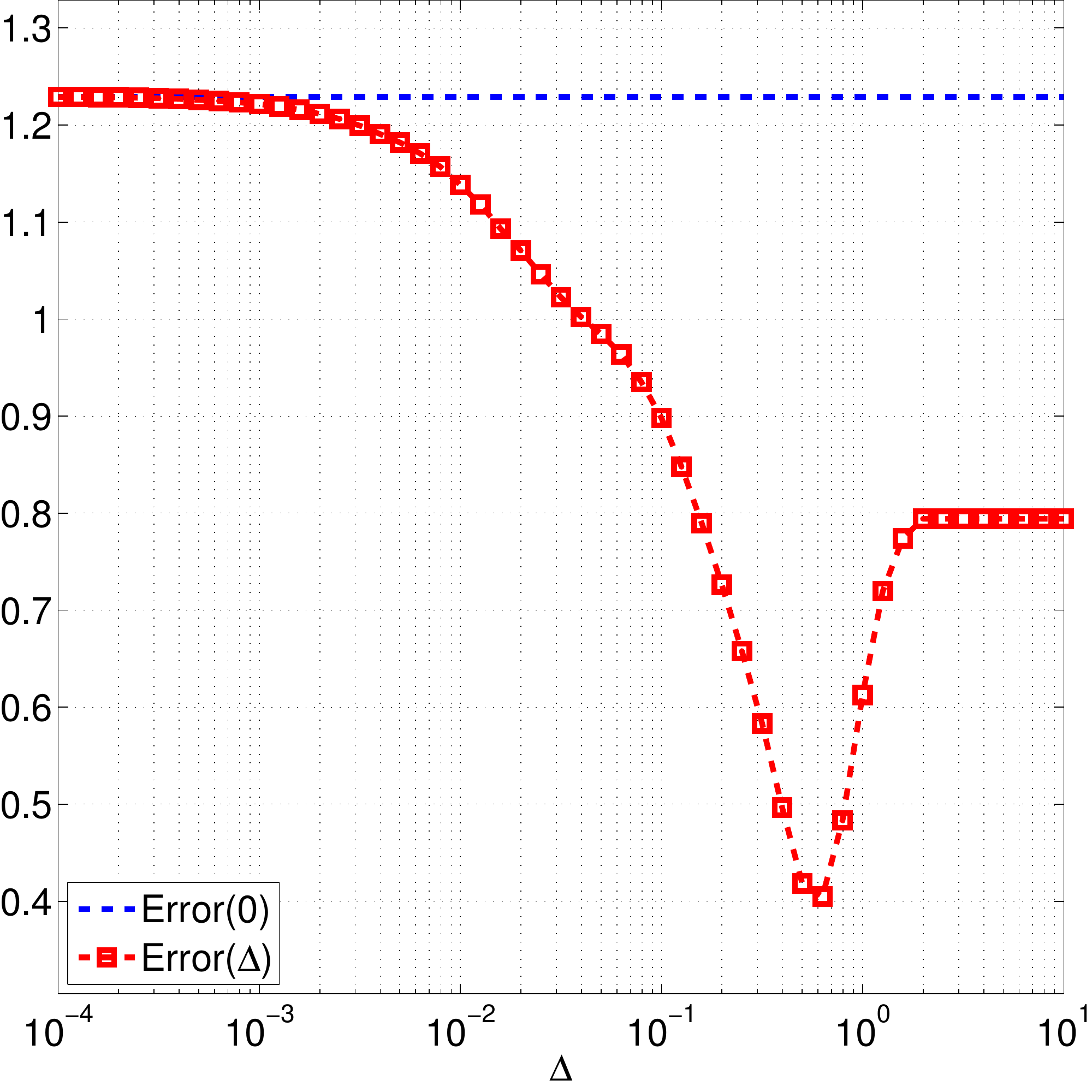} &
 \includegraphics[width=0.3\textwidth]{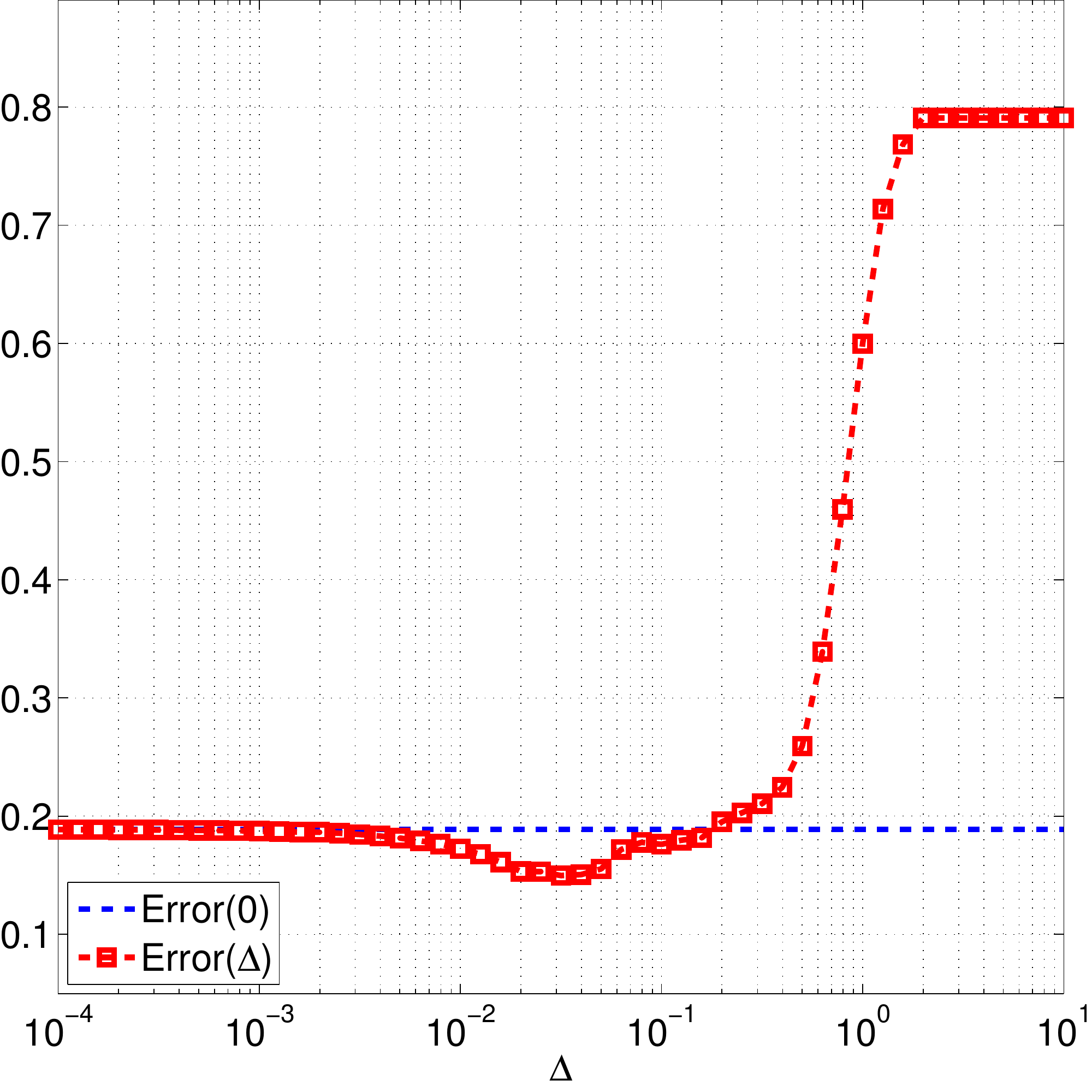}
 \end{tabular}

 \caption{Dependence of $\mbox{Error}$ on $\Delta$ for various $p \in \{30, 50, 80\}$ in matrix reconstruction.}
 \label{fig:EXP1}
\end{figure*}

\begin{figure*}[tp!]
\centering
\includegraphics[width=0.3\textwidth,clip,trim=0mm 0mm 0mm 0mm]{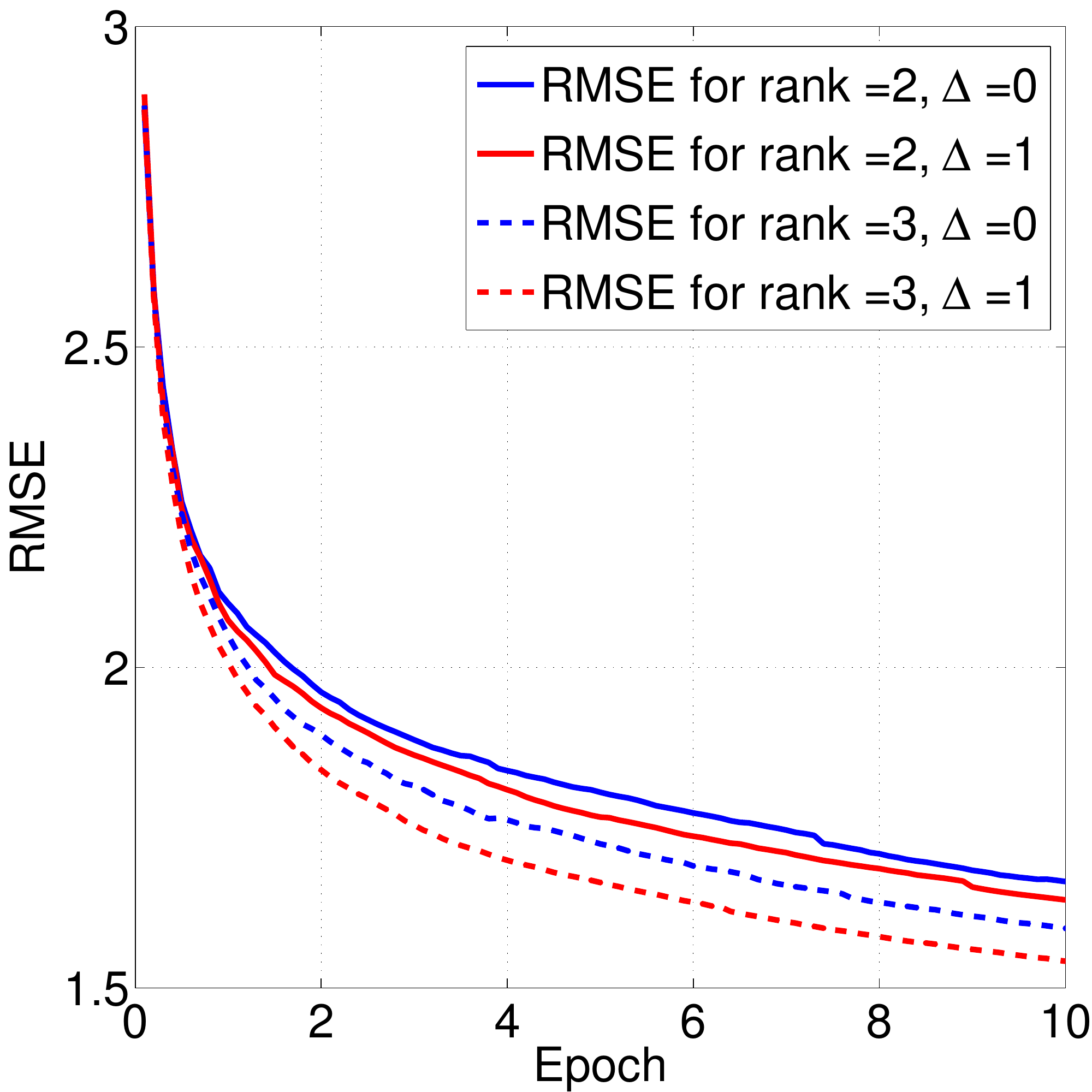}
\includegraphics[width=0.3\textwidth,clip,trim=0 0 0 7cm]{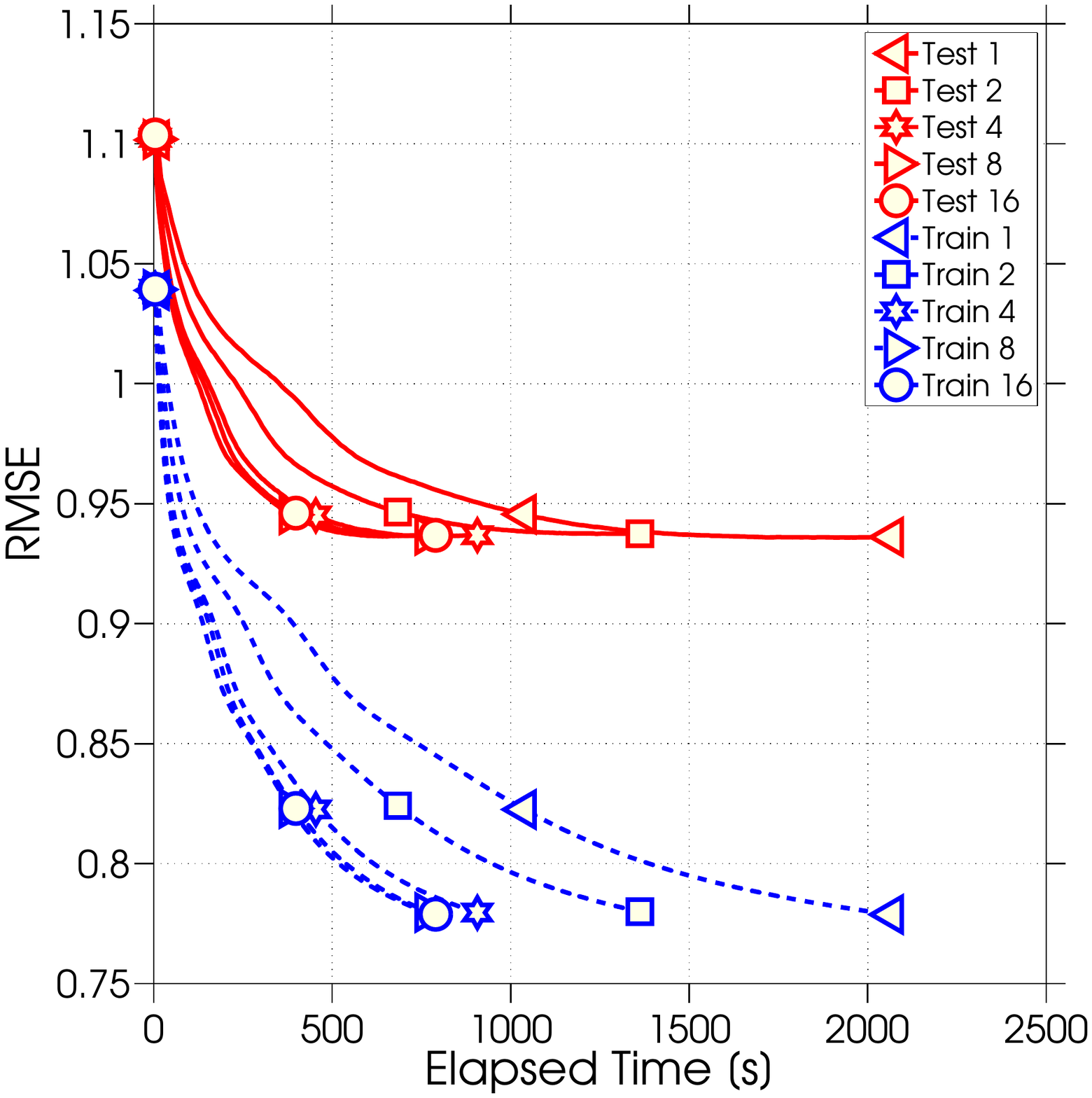}
\includegraphics[width=0.3\textwidth,clip,trim=0 0 0 7cm]{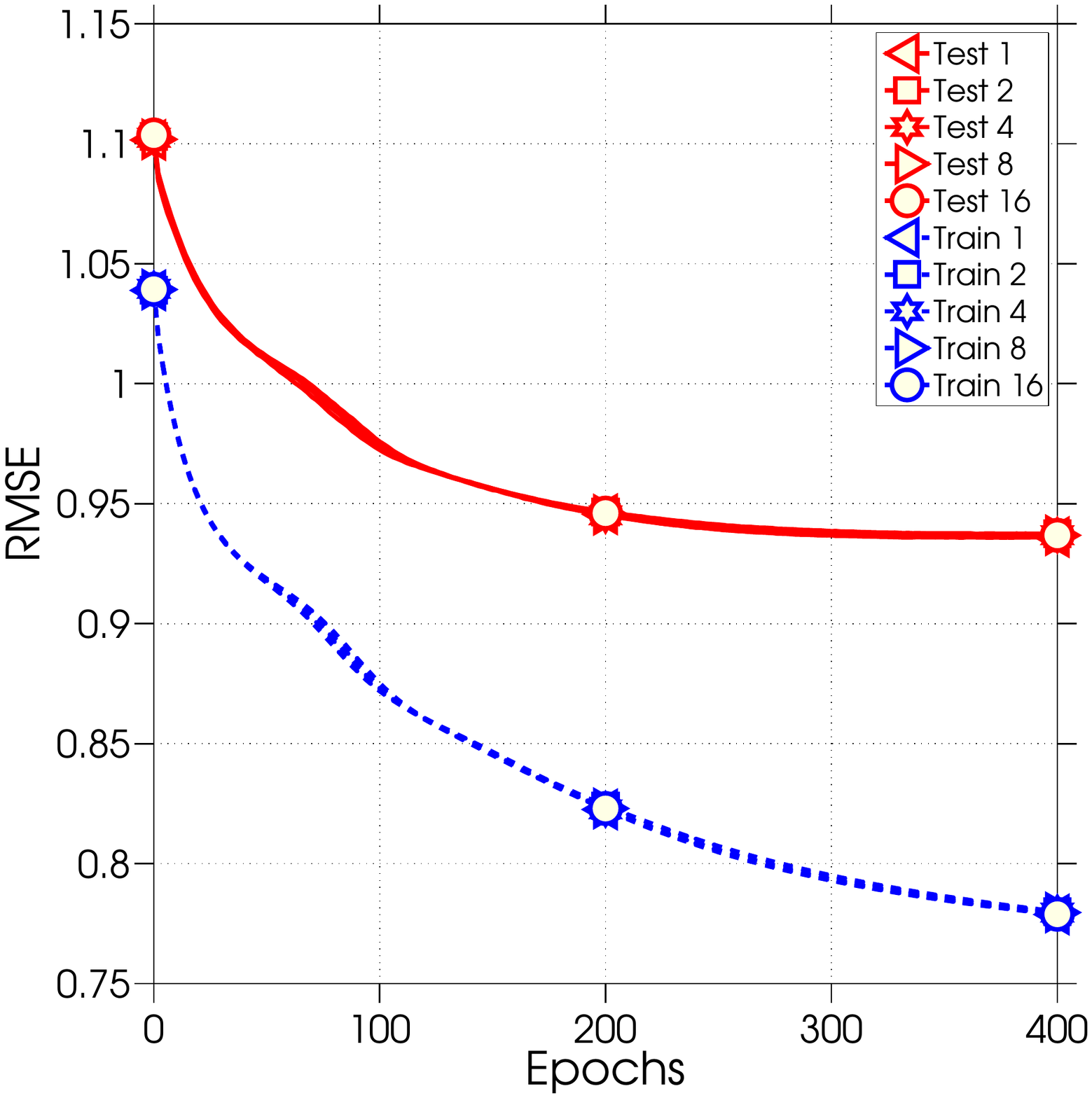}
\caption{
Left: The effect of adding inequalities ($\Delta = 1$) to the equality-constrained problem ($\Delta = 0$)
on {\tt smallnetflix}, for $r = 2, 3$, $\mu=10^{-3}$.
Center and right: 
RMSE as a function of the number of iterations and wall-clock time, respectively, 
on a well-known $480189 \times 17770$ matrix, for $r=20$ and $\mu=16$.
}
\label{fig:RMSE}
\end{figure*}

\subsection{Image In-Painting}

Further, we provide a comparison on the in-painting benchmark of \cite{wang2014rank}. Table~\ref{table:ir} details the performance of 
SVT \cite{candes2009exact}, SVP \cite{jain2010}, SoftImpute \cite{mazumder2010spectral}, LMaFit \cite{goldfarb2009solving}, ADMiRA, \cite{Lee2010}, JS \cite{jaggi2010simple}, 
OR1MP \cite{wang2014rank}, and EOR1MP \cite{wang2014rank} on 
10 well-known gray-scale images 
(Barbara, Cameraman, Clown, Couple, Crowd, Girl, Goldhill, Lenna, Man, Peppers) of $512 \times 512$ pixels each. 
$50\%$ of pixels were removed uniformly at random, 
and the image was reconstructed using rank 50. 
The performance was measured in terms of PSNR, which is $10 \log_{10}(255^2 / E)$ for mean squared error $E$.
Our approach with inequalities $0 \le Y_{i,j} \le 255$ dominates 
all other approaches on 7 out of the 10 images. 
On the remaining 3 images, one would have to use the extrema of the 
observed elements, e.g., a subinterval of 12--246 for Barbara.

To illustrate the aggregate results further, we undertook the following experiment.
We took a $512\times 512$ gray scale image (Lenna) and chose $50\%$ of the pixels randomly, indexed as $\mathcal{I}$.
Then, we ran Algorithm \ref{alg:SCDM} for $10^7$ serial iterations ($|\hatS|=1$).
We obtained
solutions $X_{E}(\mbox{rank})$ and $X_{IN}(\mbox{rank})$, where
$X_{E}(\mbox{rank})$ was obtained when we used only equality constrains ($\sE=\mathcal{I}, \sB\equiv \sA \equiv \emptyset$)
and
$X_{IN}(\mbox{rank})$ was obtained when we used also inequality constrains
($\sE=\mathcal{I}$, $\sU\equiv \sL \equiv -\mathcal{I}$,
$X^\sL = {\bf 0}\in\R^{512 \times 512}$, $X^\sU = {\bf 1}\in\R^{512 \times 512}$ and $-\mathcal{I}$ is a set of all elements of $X$ except those in $\mathcal{I}$).
Figure \ref{fig:EXP3} shows for different $\mbox{rank}\in\{30,50,100\}$ the best $\mbox{rank}$ approximation obtained by SVD ($X(\mbox{rank})$)
and solutions $X_{E}(\mbox{rank})$ and $X_{IN}(\mbox{rank})$.
The benefit of obvious inequality constrains is nicely visible, e.g., at $\mbox{rank}=100$, where the
relative error of reconstruction is more than twice smaller. Further, the image is more smooth, upon visual inspection.

\begin{figure*}[tp!]
 \centering \small
 \begin{tabular}{c|c|c|c}
 rank & $X(\mbox{rank})$ & $X_{E}(\mbox{rank})$ & $X_{IN}(\mbox{rank})$
 \\ \hline \hline
30 &  \includegraphics[width=1.5in]{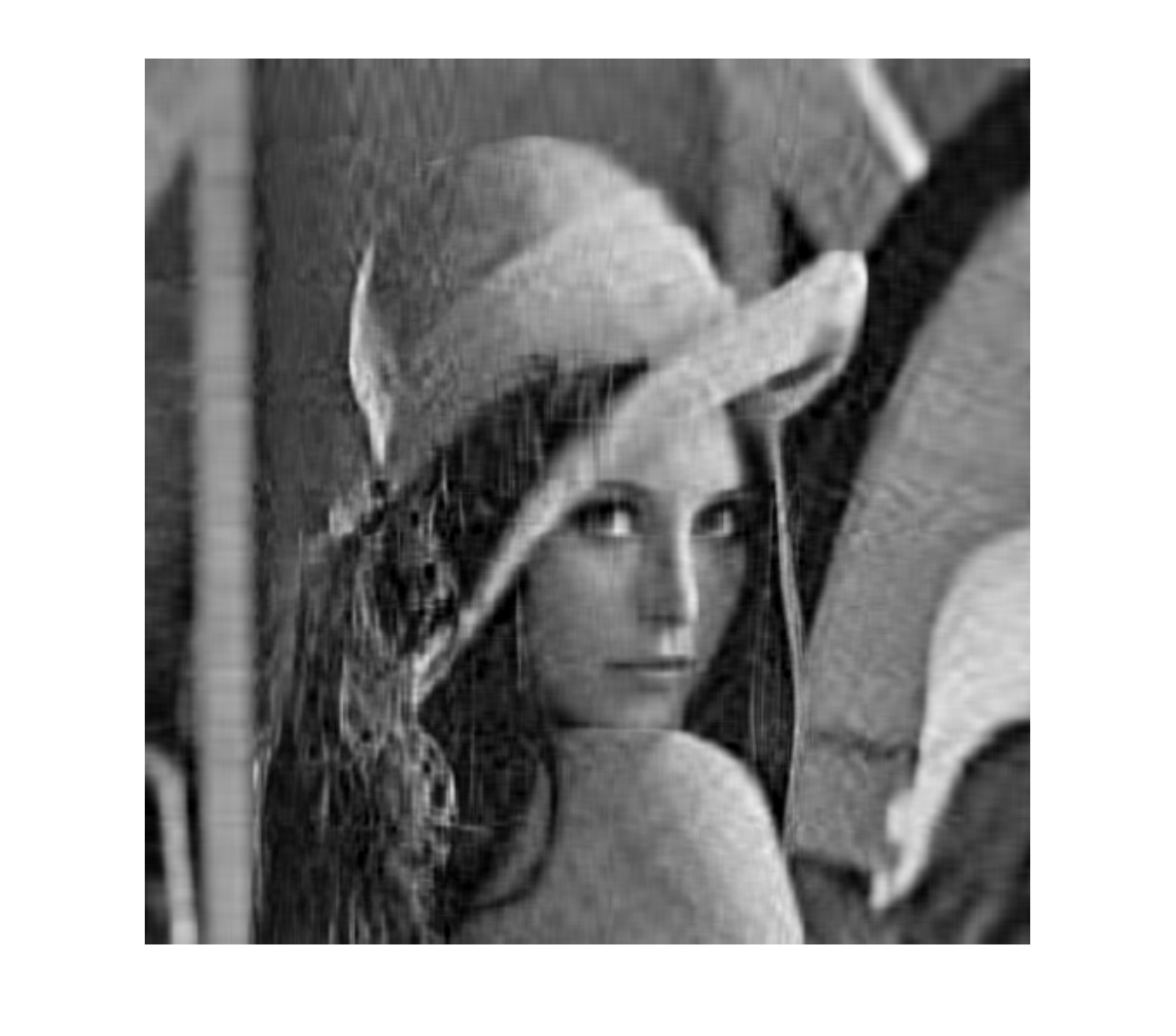} &
  \includegraphics[width=1.5in]{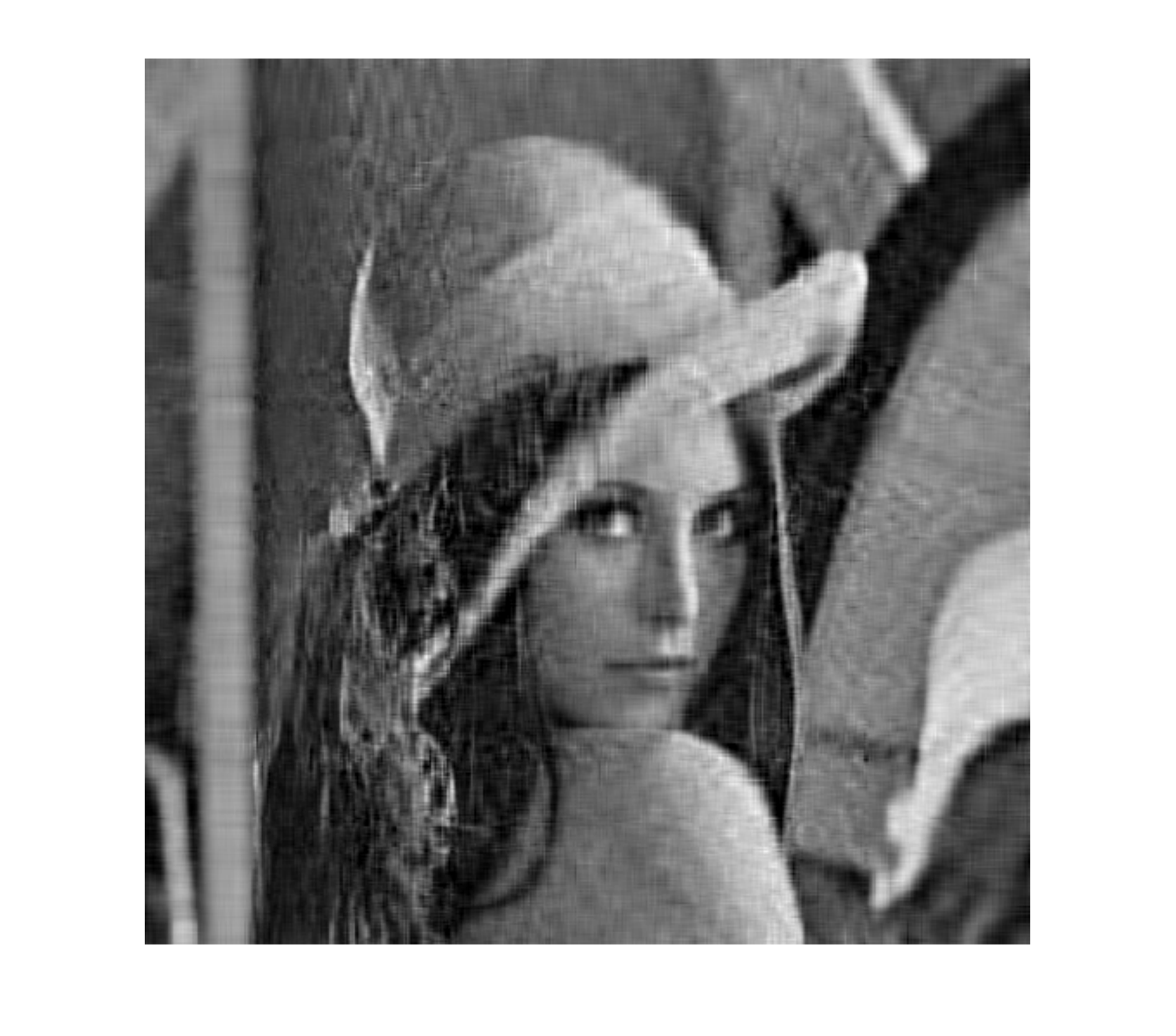} &
  \includegraphics[width=1.5in]{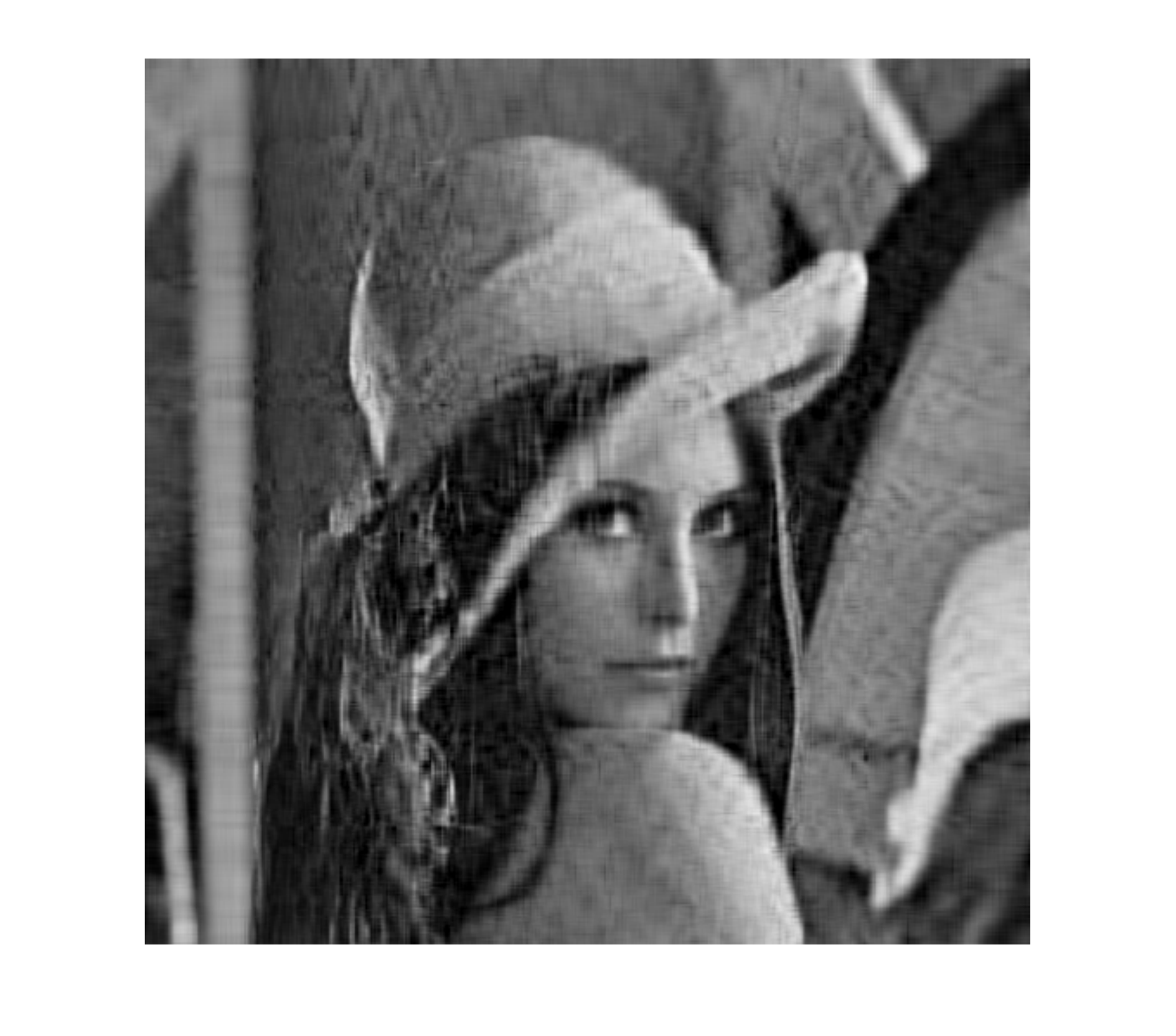}
  \\
 &\footnotesize $\|X(\mbox{rank})\|_F= 223.9999$ & \footnotesize$Error=13.1394$ & \footnotesize $Error=12.6303$
   \\ \hline
50 & \includegraphics[width=1.5in]{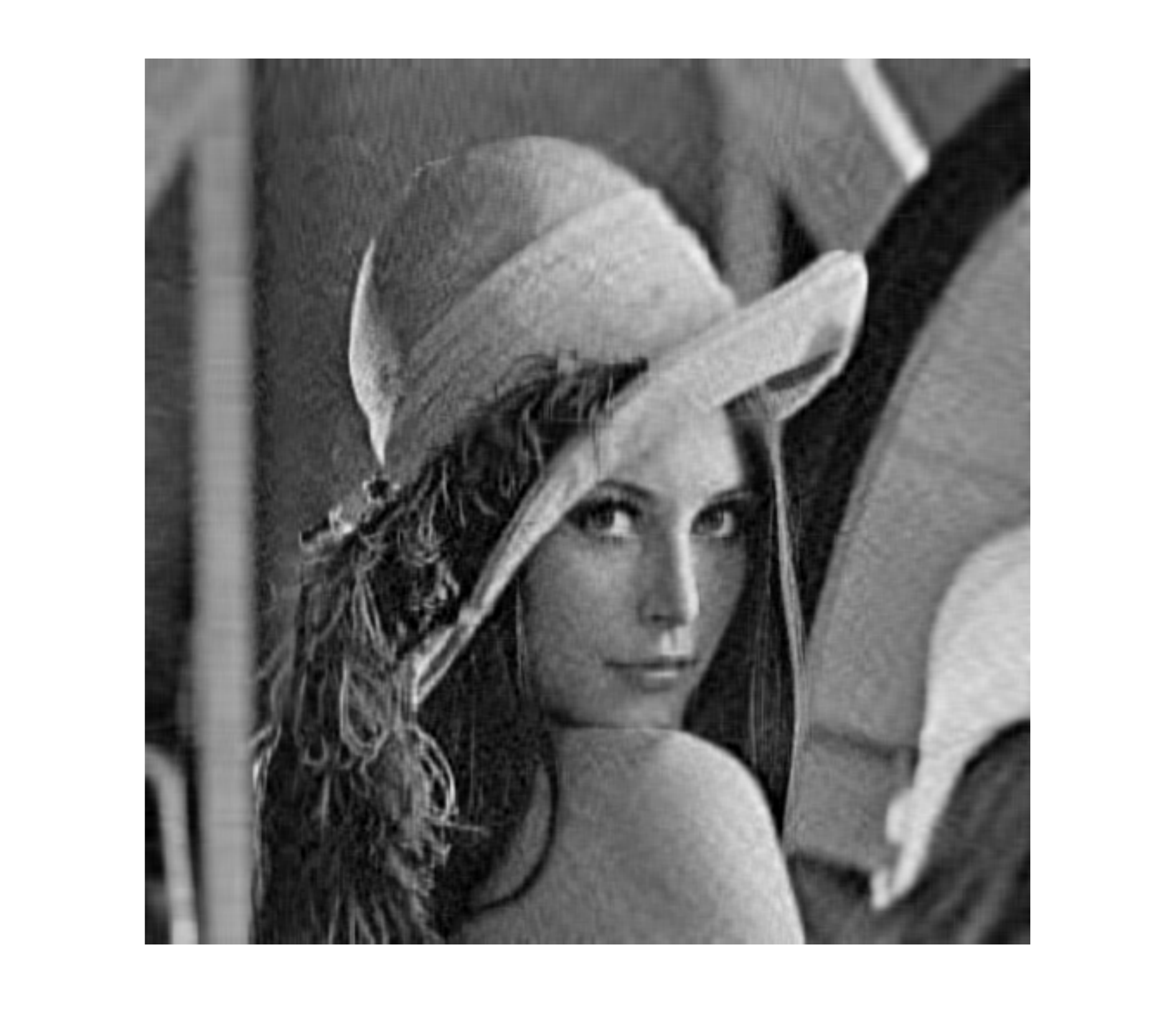} &
  \includegraphics[width=1.5in]{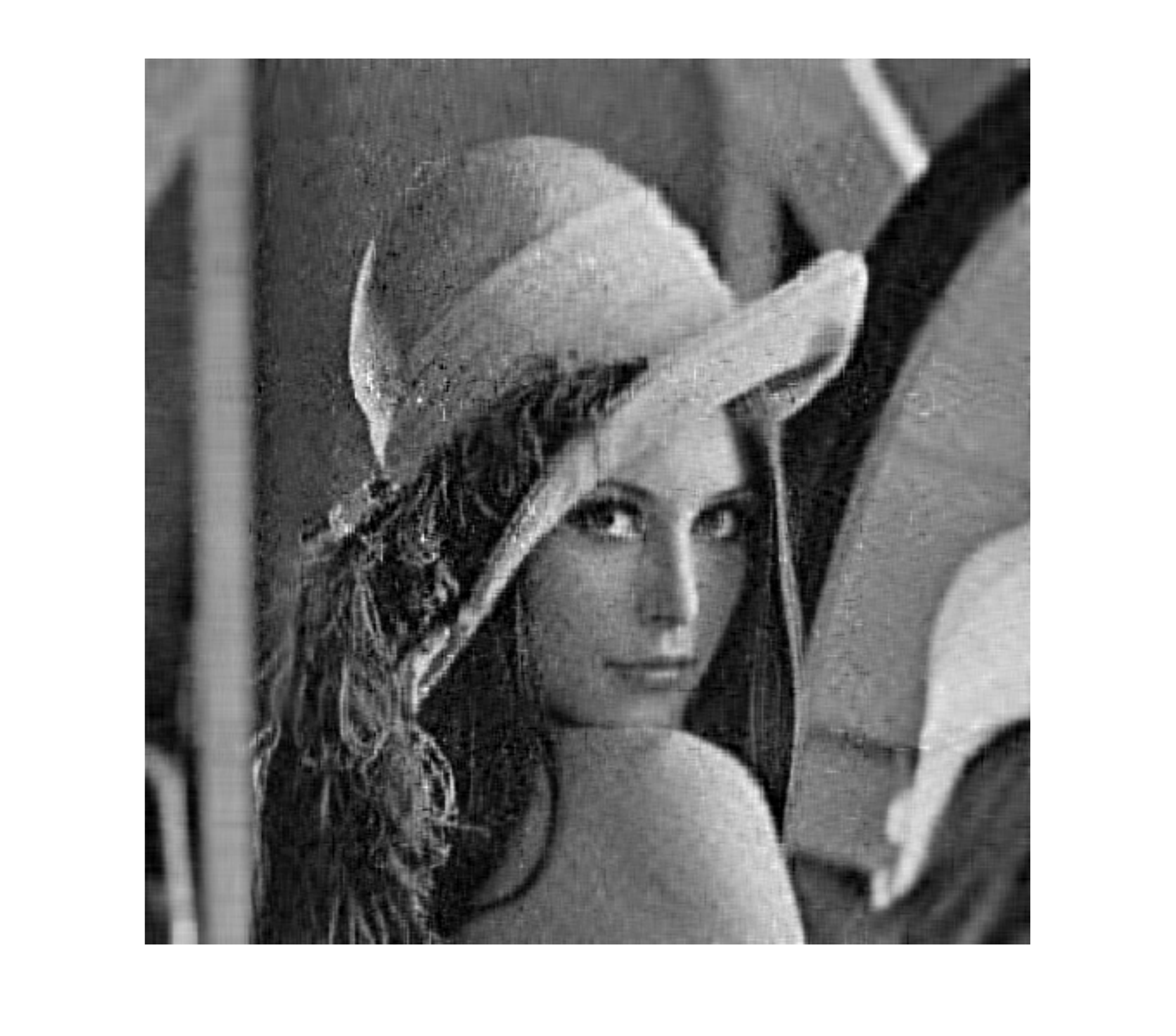} &
  \includegraphics[width=1.5in]{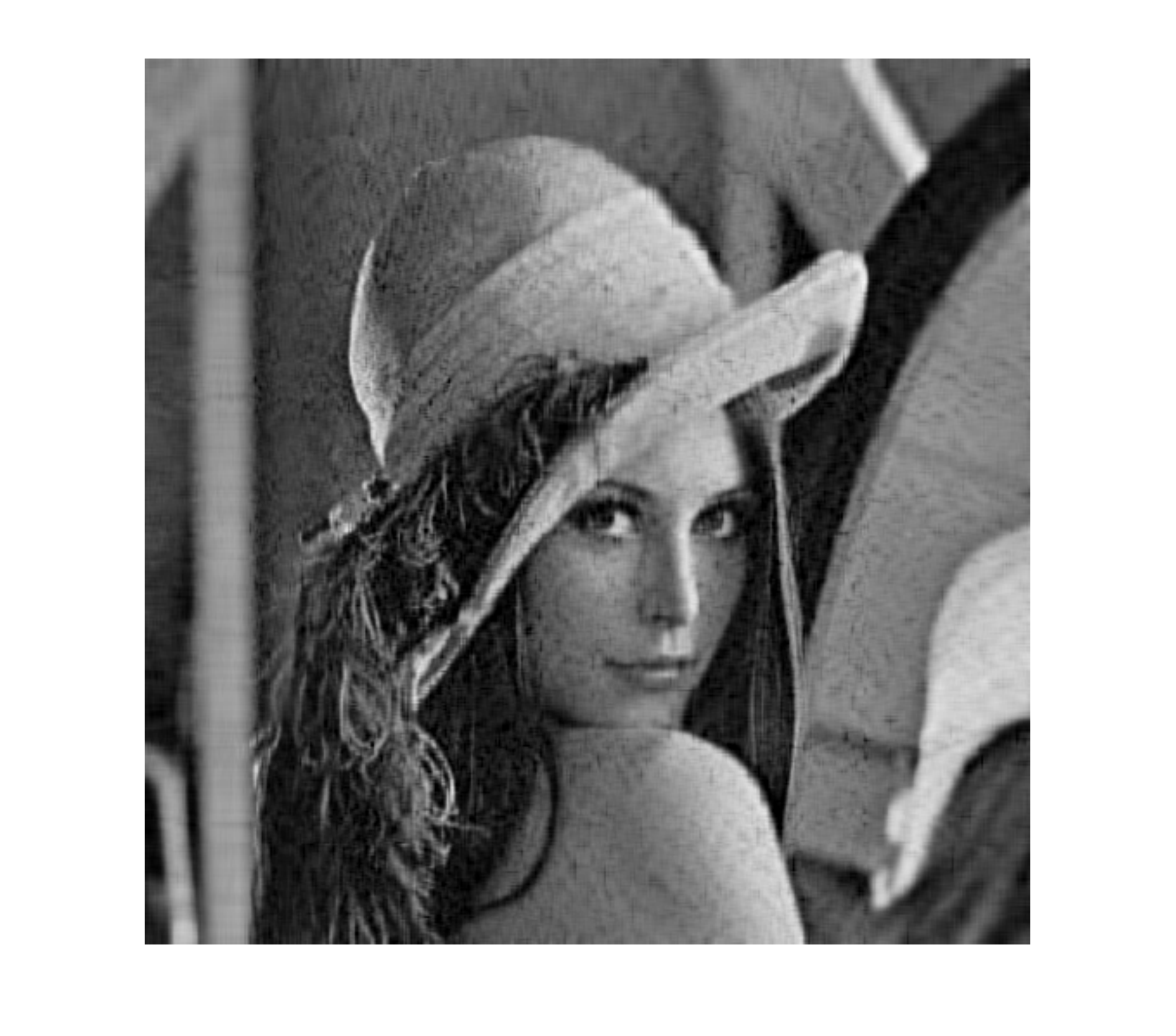}
  \\
   & \footnotesize$\|X(\mbox{rank})\|_F= 224.6876$ & \footnotesize$Error=18.2070$ &  \footnotesize$Error= 13.1859$

   \\ \hline
100 & \includegraphics[width=1.5in]{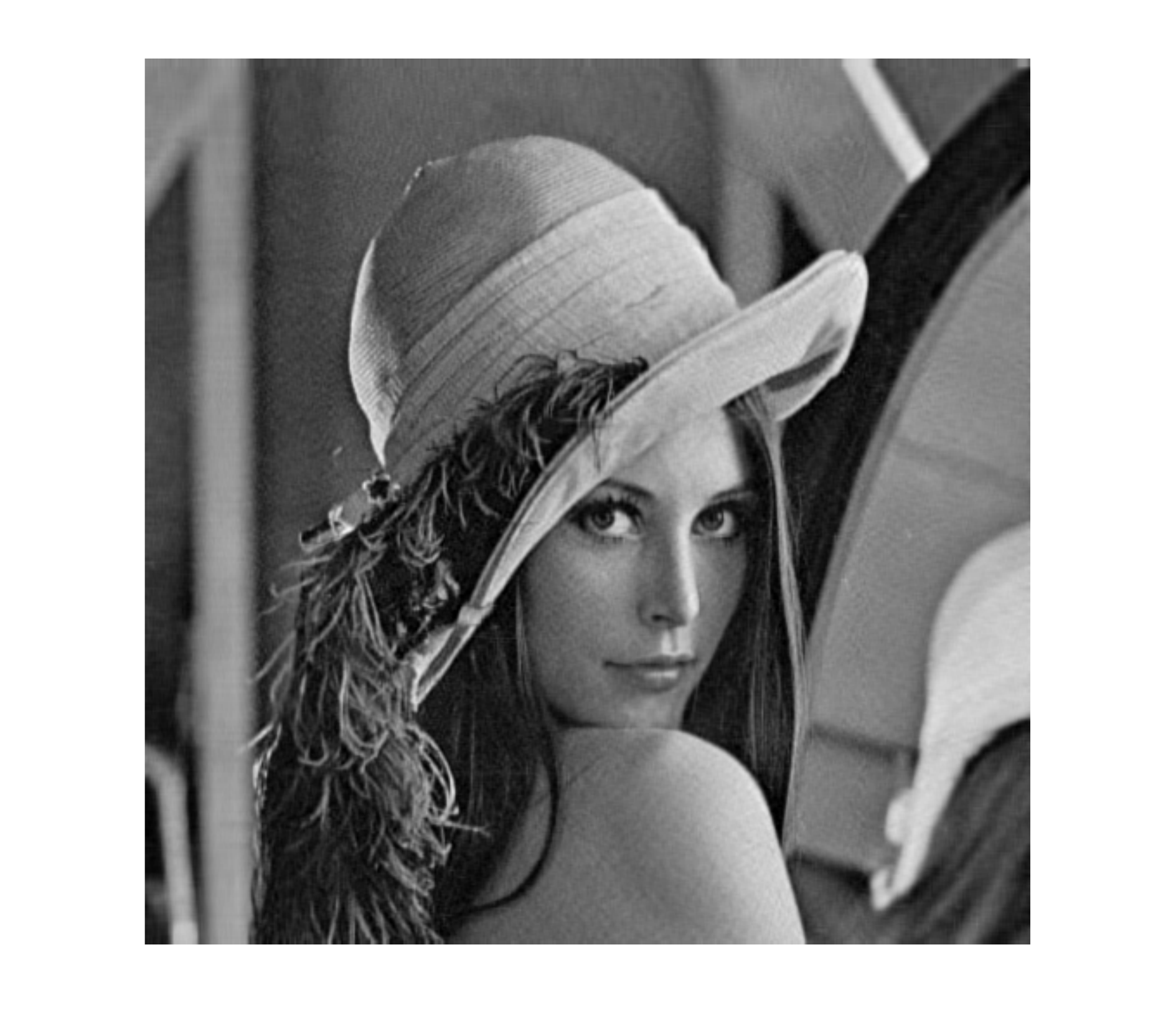} &
  \includegraphics[width=1.5in]{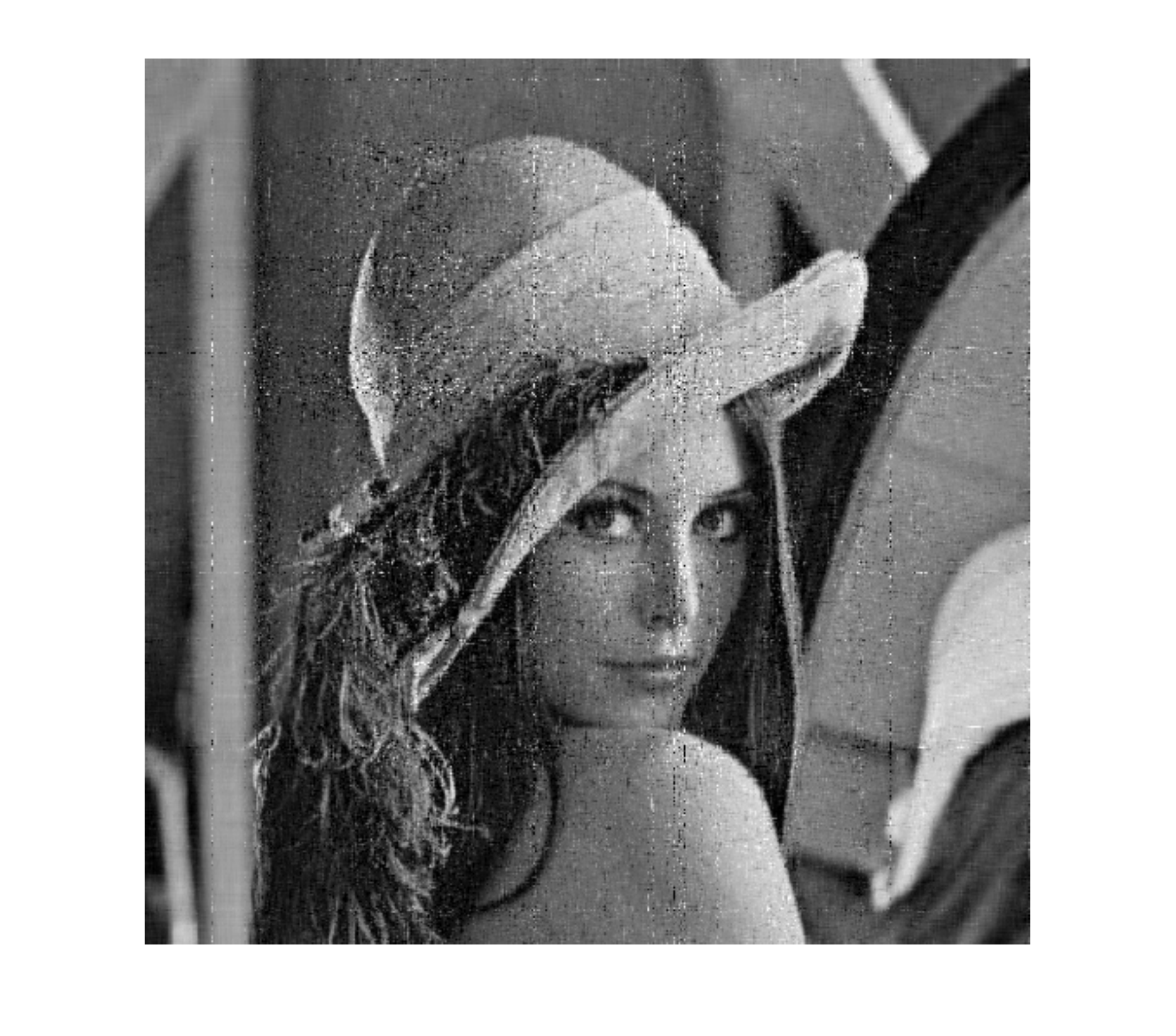} &
  \includegraphics[width=1.5in]{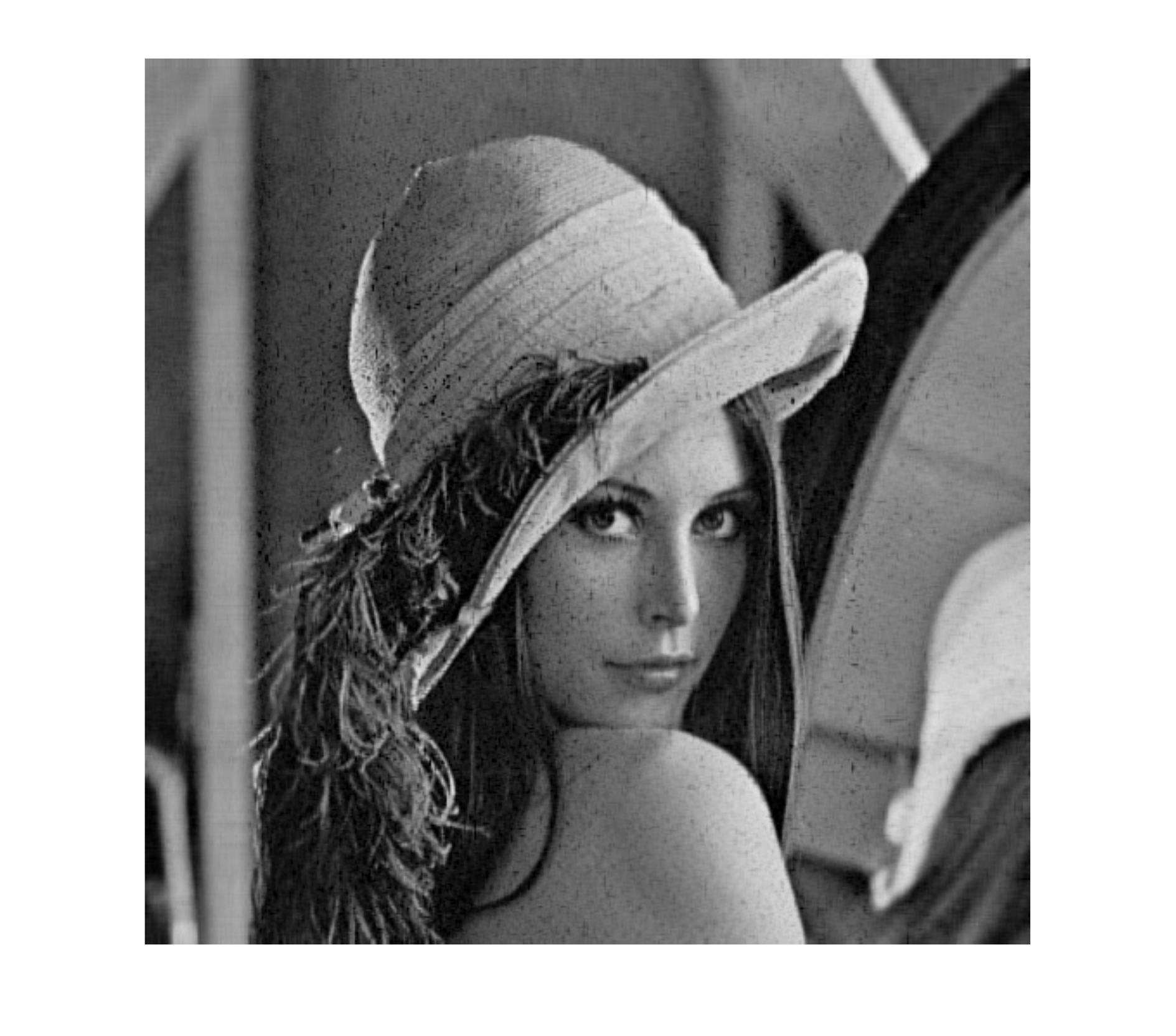}
  \\
    & \footnotesize $\|X(\mbox{rank})\|_F= 225.2117$ &\footnotesize $Error=39.1631$ &  \footnotesize$Error= 15.2551$

\normalsize
 \end{tabular}
 \caption{Adding obvious constraints can help to get better solution. Error is defined as $Error := \|X(rank)-X\|_F$.}
 \label{fig:EXP3}
\end{figure*}

\begin{landscape}
\begin{table*}[tp!]
\caption{Comparison with other solvers on the image recovery in terms of the peak signal-to-noise ratio (PSNR), citing the experiments of Wang et al.\ 
and adding results considering $0 \le Y_{i,j} \le 255$ under ``MACO''.} \label{table:ir}
\centering
\begin{small}
\begin{tabular}{ |l|c|c|c|c|c|c|c|c|c|}
\toprule 
Instance / Algo. & SVT & SVP & SoftImpute & LMaFit  & ADMiRA & JS & OR1MP & EOR1MP & MACO \\
\midrule
Barbara & {\bf 26.9635} & 25.2598 & 25.6073 & 25.9589 & 23.3528& 23.5322 & 26.5314 & 26.4413 & 23.8015\\
\hline
Cameraman & 25.6273 & 25.9444 & 26.7183 & 24.8956 & 26.7645 & 24.6238 & 27.8565  & 27.8283 & {\bf 28.9670}\\
\hline
Clown  & 28.5644  & 19.0919 & 26.9788 & 27.2748 & 25.7019& 25.2690 & 28.1963& 28.2052 & {\bf 29.0057}\\
\hline
Couple & 23.1765 & 23.7974 & 26.1033 & 25.8252 & 25.6260 & 24.4100 & 27.0707  & 27.0310 & {\bf 27.1824}\\
\hline
Crowd  & {\bf 26.9644} & 22.2959 & 25.4135 & 26.0662 & 24.0555 & 18.6562 & 26.0535 & 26.0510 & 26.1705\\
\hline
Girl  & 29.4688 & 27.5461 & 27.7180 & 27.4164 & 27.3640  & 26.1557 & 30.0878  & 30.0565 & {\bf 30.4110}\\
\hline
Goldhill & 28.3097 & 16.1256 & 27.1516 & 22.4485 & 26.5647 & 25.9706 & 28.5646 & 28.5101 & {\bf 28.6265}\\
\hline
Lenna & 28.1832  & 25.4586 & 26.7022 & 23.2003 & 26.2371 & 24.5056 & 28.0115 & 27.9643 & {\bf 28.3581}\\
\hline
Man  & {\bf 27.0223} & 25.3246 & 25.7912 & 25.7417 & 24.5223 & 23.3060 & 26.5829 & 26.5049&  26.5990\\
\hline
Peppers  &  25.7202 & 26.0223 & 26.8475 & 27.3663 &25.8934 & 24.0979 & 28.0781  & 28.0723 & {\bf 28.8469}\\
\bottomrule
\end{tabular}
\end{small}
\end{table*}
\end{landscape}

Further, we took a $50\times 50$ image and sampled randomly 50\% of pixels. (The image is the top-left corner of the Lenna image.)
Figure \ref{fig:EXP0} shows the original image $X$ and the best rank 10 approximation $X(10)$.
The solutions $X_\sE$,  $X_{\sE + \sB}$, $X_{\sE + \sA}$ and  $X_{\sE + \sB + \sA}$ were obtained by running Algorithm \ref{alg:SCDM}
for $3 \times 10^5$ serial iterations ($|\hatS|=1$), where $\sE$ contains the observed pixels and $\sB$ and $\sA$ contains all other pixels.
We have used $X^\sL = {\bf 0}$ and $X^{\sU} = {\bf 1}$.
The result  again suggest that adding simple and obvious constrains leads to better low rank reconstruction and helps to keep reconstructed elements of matrix in expected bounds.

%
%
%

 \begin{figure*}[tp!]
 \centering \footnotesize
 \begin{tabular}{c|c|c}
   $X$ & 
   $X(10) $ &
   $X_\sE$     \\
 \includegraphics[width=1.5in]{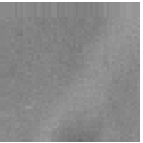}  &
 \includegraphics[width=1.5in]{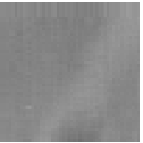}  &
 \includegraphics[width=1.5in]{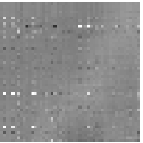}  \\
$\|X\|_F =  26.63$ &
$\|X(10)\|_F =  26.63$ &
$RE =  0.1031$ \\
   $X_{\sE + \sB}$   &
   $X_{\sE + \sA}$   &
   $X_{\sE + \sB + \sA}$\\
 \includegraphics[width=1.5in]{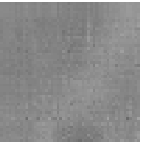}  &
 \includegraphics[width=1.5in]{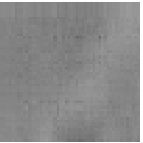}  &
 \includegraphics[width=1.5in]{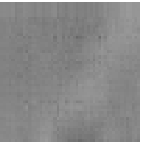} \\
$RE =  0.0357$ &
$RE =  0.0262$ &
$RE =  0.0262$\\    
 \end{tabular}
\normalsize
 \caption{Original $50\times 50$ image, the best rank 10 approximation and reconstruction using Algorithm \ref{alg:SCDM} with different settings.
 The $RE$ is a relative error defined as $RE(X_\cdot) = \|X_\cdot - X(10)\|_F / \|X(10)\|$.}
 \label{fig:EXP0}
\end{figure*}

\subsection{The Run-Time}

Finally, in order to illustrate the run-time and efficiency of parallelization of Algorithm \ref{alg:SCDM},
Figure \ref{fig:RMSE} (right) 
presents the evolution of RMSE over time on the well-known $480,189 \times 17,770$ 
matrix of rank 20.
There is an almost linear speed-up visible from 1 to 4 cores and marginally worse speed-up between 4 and 8 cores.
Considering that most other algorithms proposed in the literature cannot cope with instances of this size, we 
cannot compare the performance directly to 
SVT \cite{candes2009exact}, SVP \cite{jain2010}, SoftImpute \cite{mazumder2010spectral}, LMaFit \cite{goldfarb2009solving}, ADMiRA, \cite{Lee2010}, JS \cite{jaggi2010simple}, 
OR1MP \cite{wang2014rank}, EOR1MP \cite{wang2014rank}, and similar.
We can, however, compare the run-time on the $512 \times 512$ instances, 
detailed in Table~\ref{table:ir}.

\section{Conclusions}

We have studied the matrix completion problem under interval uncertainty and an efficient algorithm, which converges to stationary points of the NP-Hard, non-convex optimisation problem,
without ever trying to approximate the spectrum of the matrix.
In our computational experiments, we have shown that even the seemingly most trivial inequality constraints are useful in a number of applications.
This opens numerous avenues for further research:

\begin{itemize}
\item  
Forecasting with Side Information: 
A related application comes from the forecasting of seasonal data, e.g. sales.
Let us assume that in process $\left\{X_t\right\}$, one knows $k + 1 = \tau$ such that
$ F_{X}(x_{t_1+\tau} ,\ldots, x_{t_k+\tau}) = F_{X}(x_{t_1},\ldots, x_{t_k})$
for the cumulative distribution function $F_{X}(x_{t_1 + \tau}, \ldots, x_{t_k + \tau})$ of the joint distribution of $\left\{X_t\right\}$ at times $t_1 + \tau, \ldots, t_k + \tau$.
One can then formulate the forecasting into the future as a matrix completion problem,
where there the historical datum at time $t$ is at row $\lfloor t / \tau \rfloor$, column $t \mod k$  specified by an equality or a pair of inequalities,
and where inequalities represent side information. 
For example in sales forecasts,
one often has bookings for many months in advance and knows that the sales for the
respective months will not be less than the bookings taken.

\item Non-negative matrix factorization: The coordinate descent algorithm for the problem \eqref{eq:NONCREF} is easy to extend, e.g., toward non-negative factorization. It is sufficient to modify
lines 7 and 13 in Algorithm \ref{alg:SCDM} as follows:
$L_{i,\hat r} = \max \{0, L_{i,\hat r} + \delta_{i,\hat r} \}$,
$R_{\hat r,j} = \max \{0, R_{\hat r,j} + \delta_{\hat r,j} \}$.
One could consider extensions beyond box constraints on the individual elements as well.

\item Auto-tuning $\mu$: If we have some \emph{a priori} bound on the largest eigenvalue of the matrix to reconstruct, let us denote it $\zeta$,
then we can modify
lines 7 and 13 in Algorithm \ref{alg:SCDM} as follows
$L_{i,\hat r} = \max\{ \min \{\zeta, L_{i,\hat r} + \delta_{i,\hat r} \}, -\zeta \}$,
$R_{\hat r,j} = \max\{ \min  \{0, R_{\hat r,j} + \delta_{\hat r,j} \}, -\zeta \}$.
\end{itemize}

We would be delighted to share our code with other researchers interested in these and related problems. Currently, the code is available from \url{http://optml.github.io/ac-dc/}. Should it
become unavailable, for any reason, we encourage researchers to contact us.

\clearpage

\bibliographystyle{plain}
 \bibliography{completion} 

\begin{thebibliography}{10}

\bibitem{Alaiz2013}
Carlos~M. Ala{\' i}z, Francesco Dinuzzo, and Suvrit Sra.
\newblock Correlation matrix nearness and completion under observation
  uncertainty.
\newblock {\em IMA J. Numer. Anal.}, 35(1):325--340, 2013.

\bibitem{4470228}
R.M. Bell and Y.~Koren.
\newblock Scalable collaborative filtering with jointly derived neighborhood
  interpolation weights.
\newblock In {\em ICDM}, pages 43--52, Oct 2007.

\bibitem{cai2010singular}
Jian-Feng Cai, Emmanuel~J Cand{\`e}s, and Zuowei Shen.
\newblock A singular value thresholding algorithm for matrix completion.
\newblock {\em SIAM J. Optim.}, 20(4):1956--1982, 2010.

\bibitem{candes2011robust}
Emmanuel~J Cand{\`e}s, Xiaodong Li, Yi~Ma, and John Wright.
\newblock Robust principal component analysis?
\newblock {\em J. Assoc. Comput. Mach.}, 58(3):11, 2011.

\bibitem{candes2010matrix}
Emmanuel~J Cand{\`e}s and Yaniv Plan.
\newblock Matrix completion with noise.
\newblock {\em Proc. IEEE}, 98(6):925--936, 2010.

\bibitem{candes2009exact}
Emmanuel~J Cand{\`e}s and Benjamin Recht.
\newblock Exact matrix completion via convex optimization.
\newblock {\em FoCM}, 9(6):717--772, 2009.

\bibitem{Chen2011}
Yudong Chen, Huan Xu, Constantine Caramanis, and Sujay Sanghavi.
\newblock Robust matrix completion with corrupted columns.
\newblock pages 873--880, 2011.

\bibitem{chen2013spectral}
Yuxin Chen and Yuejie Chi.
\newblock Spectral compressed sensing via structured matrix completion.
\newblock In {\em ICML}, pages 414--422, 2013.

\bibitem{Grigoriev84}
Alexander~L Chistov and D~Yu Grigor'ev.
\newblock Complexity of quantifier elimination in the theory of algebraically
  closed fields.
\newblock In {\em MFCS}, pages 17--31. Springer, 1984.

\bibitem{fazel2002matrix}
Maryam Fazel.
\newblock {\em Efficient Algorithms for Collaborative Filtering}.
\newblock PhD thesis, Stanford University, 2002.

\bibitem{goldfarb2009solving}
Donald Goldfarb, Shiqian Ma, and Zaiwen Wen.
\newblock Solving low-rank matrix completion problems efficiently.
\newblock In {\em Allerton}, pages 1013--1020. IEEE, 2009.

\bibitem{4803763}
J.P. Haldar and D.~Hernando.
\newblock Rank-constrained solutions to linear matrix equations using
  powerfactorization.
\newblock {\em IEEE Signal Proc. Let.}, 16(7):584--587, July 2009.

\bibitem{halko2011finding}
Nathan Halko, Per-Gunnar Martinsson, and Joel~A Tropp.
\newblock Finding structure with randomness: Probabilistic algorithms for
  constructing approximate matrix decompositions.
\newblock {\em SIAM Rev.}, 53(2):217--288, 2011.

\bibitem{harvey2006complexity}
Nicholas~JA Harvey, David~R Karger, and Sergey Yekhanin.
\newblock The complexity of matrix completion.
\newblock In {\em SODA}, pages 1103--1111, 2006.

\bibitem{jaggi2010simple}
Martin Jaggi and Marek Sulovsk{\' y}.
\newblock A simple algorithm for nuclear norm regularized problems.
\newblock In {\em ICML}, pages 471--478, 2010.

\bibitem{jain2010}
Prateek Jain, Raghu Meka, and Inderjit~S Dhillon.
\newblock Guaranteed rank minimization via singular value projection.
\newblock In {\em NIPS}, pages 937--945, 2010.

\bibitem{Jain2013}
Prateek Jain, Praneeth Netrapalli, and Sujay Sanghavi.
\newblock Low-rank matrix completion using alternating minimization.
\newblock In {\em STOC}, pages 665--674, 2013.

\bibitem{Kannan2012}
R.~Kannan, M.~Ishteva, and Haesun Park.
\newblock Bounded matrix low rank approximation.
\newblock In {\em ICDM}, pages 319--328, Dec 2012.

\bibitem{keshavan2010matrix}
Raghunandan~H Keshavan, Andrea Montanari, and Sewoong Oh.
\newblock Matrix completion from a few entries.
\newblock {\em IEEE T. Inf. Theory}, 56(6):2980--2998, 2010.

\bibitem{Keshavan2012}
Raghunandan~Hulikal Keshavan.
\newblock {\em Efficient Algorithms for Collaborative Filtering}.
\newblock PhD thesis, Stanford University, 2012.

\bibitem{Koren2009}
Y.~Koren, R.~Bell, and C.~Volinsky.
\newblock Matrix factorization techniques for recommender systems.
\newblock {\em Computer}, 42(8):30--37, Aug 2009.

\bibitem{Lee2010}
Kiryung Lee and Y.~Bresler.
\newblock Admira: Atomic decomposition for minimum rank approximation.
\newblock {\em IEEE T. Inf. Theory}, 56(9):4402--4416, Sept 2010.

\bibitem{li2014robust}
Guoyin Li, Alfred Ka~Chun Ma, and Ting~Kei Pong.
\newblock Robust least square semidefinite programming with applications.
\newblock {\em Comput. Optim. App.}, 58(2):347--379, 2014.

\bibitem{Hydra3}
Jakub Mare\v{c}ek, Peter Richt\'{a}rik, and Martin Tak\'{a}\v{c}.
\newblock Distributed block coordinate descent for minimizing partially
  separable functions.
\newblock In {\em Numerical Analysis and Optimization}, pages 261--288, 2015.
\newblock {S}pringer Proceedings in Math. and Statistics, Vol. 134.

\bibitem{mazumder2010spectral}
Rahul Mazumder, Trevor Hastie, and Robert Tibshirani.
\newblock Spectral regularization algorithms for learning large incomplete
  matrices.
\newblock {\em J. Mach. Learn. Res.}, 11:2287--2322, 2010.

\bibitem{Mehta2007}
Bhaskar Mehta, Thomas Hofmann, and Wolfgang Nejdl.
\newblock Robust collaborative filtering.
\newblock In {\em RecSys}, pages 49--56, New York, NY, USA, 2007. ACM.

\bibitem{mnih2007probabilistic}
Andriy Mnih and Ruslan Salakhutdinov.
\newblock Probabilistic matrix factorization.
\newblock In {\em NIPS}, pages 1257--1264, 2007.

\bibitem{MR1320206}
Balas~Kausik Natarajan.
\newblock Sparse approximate solutions to linear systems.
\newblock {\em SIAM J. Comput.}, 24(2):227--234, 1995.

\bibitem{nesterov2012efficiency}
Yurii Nesterov.
\newblock Efficiency of coordinate descent methods on huge-scale optimization
  problems.
\newblock {\em SIAM J. Optim.}, 22(2):341--362, 2012.

\bibitem{niu2011hogwild}
Benjamin Recht, Christopher R{\'e}, Stephen~J. Wright, and Feng Niu.
\newblock Hogwild: A lock-free approach to parallelizing stochastic gradient
  descent.
\newblock In {\em NIPS}, pages 693--701, 2011.

\bibitem{Rennie2005}
Jasson D.~M. Rennie and Nathan Srebro.
\newblock Fast maximum margin matrix factorization for collaborative
  prediction.
\newblock In {\em ICML}, ICML '05, pages 713--719, New York, NY, USA, 2005.
  ACM.

\bibitem{RT-serial}
Peter Richt{\'a}rik and Martin Tak{\'a}{\v{c}}.
\newblock Iteration complexity of randomized block-coordinate descent methods
  for minimizing a composite function.
\newblock {\em Math. Program.}, 144(2):1--38, 2014.

\bibitem{NSync}
Peter Richt{\'a}rik and Martin Tak{\'a}{\v{c}}.
\newblock On optimal probabilities in stochastic coordinate descent methods.
\newblock {\em Optim. Lett.}, pages 1--11, 2015.

\bibitem{Hydra}
Peter Richt{\'a}rik and Martin Tak{\'a}{\v{c}}.
\newblock Distributed coordinate descent method for learning with big data.
\newblock {\em J. Mach. Learn. Res.}, to appear, 2016.

\bibitem{PCDM}
Peter Richt{\'a}rik and Martin Tak{\'a}{\v{c}}.
\newblock Parallel coordinate descent methods for big data optimization.
\newblock {\em Math. Program.}, 156(1):433--484, 2016.

\bibitem{sarwar2000}
Badrul Sarwar, George Karypis, Joseph Konstan, and John Riedl.
\newblock Application of dimensionality reduction in recommender system-a case
  study.
\newblock In {\em WebKDD}, 2000.

\bibitem{shalev2011large}
Shai Shalev-Shwartz, Alon Gonen, and Ohad Shamir.
\newblock Large-scale convex minimization with a low-rank constraint.
\newblock In {\em ICML}, pages 329--336, 2011.

\bibitem{Soyster1973}
Allen~L. Soyster.
\newblock Convex programming with set-inclusive constraints and applications to
  inexact linear programming.
\newblock {\em Oper. Res.}, 21(5):pp. 1154--1157, 1973.

\bibitem{srebro2004learning}
Nathan Srebro.
\newblock {\em Learning with matrix factorizations}.
\newblock PhD thesis, MIT, 2004.

\bibitem{srebro2004maximum}
Nathan Srebro, Jason Rennie, and Tommi~S Jaakkola.
\newblock Maximum-margin matrix factorization.
\newblock In {\em NIPS}, pages 1329--1336, 2004.

\bibitem{tanner2013normalized}
Jared Tanner and Ke~Wei.
\newblock Normalized iterative hard thresholding for matrix completion.
\newblock {\em SIAM J Sci. Comput.}, 35(5), 2013.

\bibitem{DQA}
Rachael Tappenden, Peter Richt{\'a}rik, and Burak B\"{u}ke.
\newblock Separable approximations and decomposition methods for the augmented
  lagrangian.
\newblock {\em Optim. Methods Softw.}, 30(3):463--668, 2015.

\bibitem{ImprovedPCDM}
Rachael Tappenden, Martin Tak\'{a}\v{c}, and Peter Richt{\'a}rik.
\newblock On the complexity of parallel coordinate descent.
\newblock {\em arXiv preprint arXiv:1503.03033}, 2015.

\bibitem{tomioka2010fast}
Ryota Tomioka, Taiji Suzuki, Masashi Sugiyama, and Hisashi Kashima.
\newblock A fast augmented lagrangian algorithm for learning low-rank matrices.
\newblock In {\em ICML}, pages 1087--1094, 2010.

\bibitem{wang2014rank}
Zheng Wang, Ming-Jun Lai, Zhaosong Lu, Wei Fan, Hasan Davulcu, and Jieping Ye.
\newblock Rank-one matrix pursuit for matrix completion.
\newblock In {\em ICML}, pages 91--99, 2014.

\bibitem{NIPS20093704}
John Wright, Arvind Ganesh, Shankar Rao, Yigang Peng, and Yi~Ma.
\newblock Robust principal component analysis: Exact recovery of corrupted
  low-rank matrices via convex optimization.
\newblock In Y.~Bengio, D.~Schuurmans, J.D. Lafferty, C.K.I. Williams, and
  A.~Culotta, editors, {\em NIPS}, pages 2080--2088. Curran Associates, Inc.,
  2009.

\bibitem{ye2005generalized}
Jieping Ye.
\newblock Generalized low rank approximations of matrices.
\newblock {\em {M}ach. {L}earn.}, 61(1-3):167--191, 2005.

\end{thebibliography}

\clearpage

\paragraph*{Acknowledgements} 
The authors are grateful for the reviews, which have helped them to improve both the presentation and contents of the paper.
In addition, the first author acknowledges funding from the European Union Horizon 2020 Programme (Horizon2020/2014-2020), under grant agreement number 688380. 
The second author would like to acknowledge support from the EPSRC Grant EP/K02325X/1, {\em Accelerated Coordinate Descent Methods for Big Data Optimization.}

\end{document}